\newtheorem{thm}{Theorem}
\newtheorem{exa}[thm]{Example}
\newtheorem{lem}[thm]{Lemma}
\newtheorem{cor}[thm]{Corollary}
\newtheorem{conj}[thm]{Conjecture}
\newtheorem{rem}[thm]{Remark}
\begin{document}
\title{Properties of the Promotion Markov Chain on Linear Extensions}
\author{
Svetlana Poznanovi\'c and Kara Stasikelis \\ [6pt]
Department of Mathematical Sciences\\
Clemson University, Clemson, SC 29634, USA\\[5pt]
}
\date{} 
\maketitle
\begin{abstract} The Tsetlin library is a very well studied model for the way an arrangement of books on a library shelf evolves over time.  One of the most interesting properties of this Markov chain is that its spectrum can be computed exactly and that the eigenvalues are linear in the transition probabilities. This result has been generalized in different ways by various people. In this work we investigate one of the generalizations given by  the extended promotion Markov Chain on linear extensions of a poset $P$ introduced by Ayyer, Klee, and Schilling in 2014. They showed that if the poset $P$ is a rooted forest, the transition matrix of this Markov chain has eigenvalues that are linear in the transition probabilities and described their multiplicities. We show that the same property holds for a larger class of posets for which we also derive convergence to stationarity results.
\end{abstract} 

 \noindent{\bf Keywords: extended promotion, linear extension, Markov chain, eigenvalues} 


{\renewcommand{\thefootnote}{} \footnote{\emph{E-mail addresses}:
spoznan@clemson.edu (S.~Poznanovi\'c), stasike@g.clemson.edu (K.~Stasikelis)}

\footnotetext[1]{The first author is partially supported by NSF grant DMS-1312817. } 

\section{Introduction} \label{S:introduction}

In this paper we study the promotion Markov chain on the set $\mathcal{L}(P)$ of linear extensions of a poset $P$. The moves used are a generalization of the Sch\"{u}tzenberger's promotion operator on $\mathcal{L}(P)$, hence the name. This Markov chain was introduced by Ayyer, Klee, and Schilling~\cite{cmc}, where they showed that if the Hasse diagram of $P$ is a rooted forest, then the transition matrix  has eigenvalues which are linear in the transition probabilities. They noticed, however, that their result does not classify all the posets with this  nice property.  The main goal of  this paper is to provide a larger class of posets for which the same result holds. 

The promotion Markov chain can be viewed as a generalization of the Tsetlin library~\cite{tsetlin1963finite}, a model for the way an arrangement of books on a library shelf evolves over time. In this Markov chain on permutations of $n$ books, a book $i$  is picked up and put at the back of the shelf with probability $x_{i}$. Due to its use in computer science, this is a very well-studied Markov chain. The spectrum of its transition matrix is of particular interest because, in general, the eigenvalues can give some indication about the rate of convergence to stationarity. Hendricks~\cite{hendricks1972stationary, hendricks1973extension} found the stationary distribution, while the fact that the eigenvalues have an elegant formula (they are all sums of the transition probabilities $x_{i}$) was independently discovered by Donnelly~\cite{donnelly1991heaps}, Kapoor and Reingold~\cite{kapoor1991stochastic}, and Phatarfod~\cite{phatarfod1991matrix}. Fill derived the transition probabilities for any number of steps in~\cite{fill1996exact} where also convergence to stationarity is discussed. 

The results about the spectrum of the Tsetlin library have been generalized in various ways. For example,  Bidigare et al.~\cite{bidigare1999combinatorial} showed that this is a special case of a walk on hyperplane arrangements (also studied in~\cite{brown1998random}), which in general has nice eigenvalues. This was further generalized to a class of monoids called left regular bands~\cite{brown2000semigroups} and subsequently to all bands~\cite{brown2004semigroup} by Brown, due to the fact that these kinds of results can be seen as a consequence of the representation theory for the monoid generated by the moves in the Markov chain. This theory has later been used by Bj\"orner~\cite{bjorner2008random, bjorner2009note} to extend eigenvalue formulas for the Tsetlin library from a single shelf to hierarchies of libraries and complex hyperplane arrangements. More recently, Ayyer et al.~\cite{ayyer2015markov} extended the results to the wider class of $\mathcal{R}$-trivial monoids and obtained the description of the eigenvalues  of the promotion Markov chain for rooted forests as a consequence of the associated  monoid being $\mathcal{R}$-trivial. 

In this paper we study the promotion Markov chain for a class of posets whose components are an ordinal sum of a rooted forest and what we call a ladder. The associated monoid is not $\mathcal{R}$-trivial, so one can not use the same arguments as in the case of rooted forests to find its spectrum. However, we show that for these posets, the eigenvalues of the transition matrix  are also linear in the probabilities $x_{i}$ of the moves (Theorem~\ref{forestladdereig}). We also give a way to compute the eigenvalues explicitly (Theorem~\ref{generalfactor}). 

The outline of the paper is as follows. We start by giving the needed definitions and background in Section~\ref{S:notation}. In Section~\ref{S:oneladder} we first show that when $P$ is a single ladder, the transition matrix is diagonalizable and we find its eigenfunctions. While the transition matrix of the Tsetlin library is is diagonalizable, this is not true for general forests. Then we prove Theorem~\ref{forestladdereig} in Section~\ref{S:proofmain}.  In Section~\ref{S:convergencesec} we derive the partition function for our class of posets and convergence results for the case when $P$ has a single component. Finally, we finish with a discussion about other posets in Section~\ref{S:conclusion}.


\section{Background and main results} \label{S:notation}

Consider a poset $P$ on the set $[n] = \{1,2,\ldots,n\}$, with partial order $\preceq$.  A \emph{linear extension} of $P$ is a total ordering $\pi = \pi_1 \cdots \pi_n$ of its elements such that $\pi_i \prec \pi_j$ implies $i < j$.  The set of linear extensions of $P$ is denoted by $\mathcal{L}(P)$. 

 Ayyer, Klee, and Schilling \cite{cmc,mcpo} introduced the idea of an extended promotion operator $\partial_i$ on $\mathcal{L}(P)$. This generalizes Sch{\"u}tzenberger's \cite{schutzenberger1972promotion} promotion operator, $\partial$, which can be expressed in terms of more elementary operators $\tau_{i}$ as shown in~\cite{haiman1992dual, malvenuto1994evacuation}.  Namely, for $i=1, \ldots,n-1$ and $\pi=\pi_1 \cdots \pi_n \in \mathcal{L}(P)$, let 
 \[\tau_i \pi = \begin{cases} \pi_1 \cdots \pi_{i-1}\pi_{i+1} \pi_i \cdots \pi_n & \text{if } \pi_i \text{ and } \pi_{i+1} \text{ are incomparable in } P, \\ \pi & \text{otherwise}. \end{cases}\]
 In other words, $\tau_i$ acts nontrivially if the interchange of $\pi_i$ and $\pi_{i+1}$ yields a linear extension of $P$. The \emph{extended promotion operator} $\partial_i$, $1 \leq i \leq n$, on $\mathcal{L}(P)$ is defined by \[\partial_i = \tau_{n-1} \cdots \tau_{i+1}\tau_i.\] In particular, $\partial_{1} = \partial$ and $\partial_{n}=\mathrm{id}_{\mathcal{L}(P)}$. Note that in this paper the operators act from the left; so  $\tau_i$ is applied first, then $\tau_{i+1}$, etc.
 
 The \emph{promotion graph} is an edge-weighted directed graph $G_{P}$ whose vertices are labeled by the elements of $\mathcal{L}(P)$. $G_{P}$ contains a directed edge from $\pi$ to $\pi'$, with edge weight $x_{\pi_i}$, if and only if $\pi' = \partial_i \pi$. If  $x_i \geq 0$,  $i =1, \ldots, n$ and  $\sum_{i=1}^n x_i=1$, this gives  rise to the promotion Markov chain on $\mathcal{L}(P)$, whose row stochastic transition matrix we will denote by $M^P$.

 \begin{exa}\label{promotionex}
Consider the poset $P$ from Figure~\ref{posetex}. 

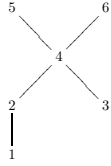
\begin{figure}[h]
\begin{center}
\scalebox{.5}{$$\xymatrix{ 5 \ar@{-}[dr] & & 6 \ar@{-}[dl] \\ & 4 \ar@{-}[dr] \ar@{-}[dl] & \\ 2 \ar@{-}[d] & & 3 \\ 1 & & }$$}
\end{center}
\caption{An example of an ordinal sum of a forest and a ladder.}
\label{posetex}
\end{figure}

The linear extensions of $P$ are  \[\mathcal{L}(P) = \{123456, 123465, 132456,132465, 312456,312465\}.\] For $\pi = 312465 \in \mathcal{L}(P)$, \[\partial_3\pi = \tau_5\tau_4\tau_3312465 = \tau_5\tau_4 312465 = \tau_5 312465  = 312456.\] Thus, since $\pi_3 = 2$, in $G_{P}$  there is a directed edge from 312465 to 312456 with edge weight $x_2$. The promotion graph $G_{P}$ is given in Figure~\ref{Pgforexample}.

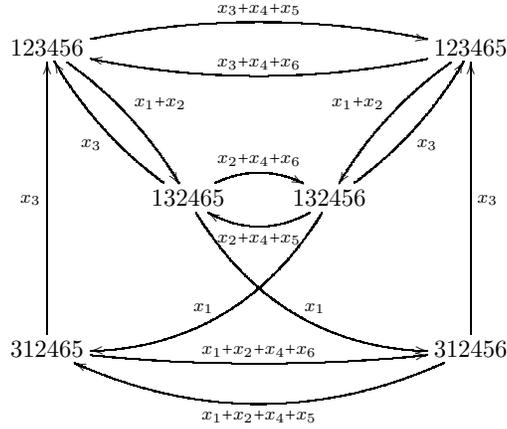
\begin{figure}[h]
\begin{center}
\scalebox{.85}{$$\xymatrix{ 123456\ar@{->}@/^1pc/[rrr]^{x_3+x_4+x_5}\ar@{->}@/^/[ddr]^{x_1+x_2} & & & 123465\ar@{->}@/^1pc/[lll]_{x_3+x_4+x_6}\ar@{->}@/_/[ddl]_{x_1+x_2} \\  & & & \\ & 132465\ar@{->}@/^/[uul]^{x_3}\ar@{->}@/^1pc/[r]^{x_2+x_4+x_6}\ar@{->}@/_1.9pc/[ddrr]^{x_1}& 132456\ar@{->}@/_/[uur]_{x_3}\ar@{->}@/^1pc/[l]^{x_2+x_4+x_5}\ar@{->}@/^1.9pc/[ddll]_{x_1} & \\  &  & & \\ 312465\ar@{->}@/_.5pc/[rrr]^{x_1+x_2+x_4+x_6}\ar@{->}[uuuu]^{x_3} & & & 312456\ar@{->}@/^2pc/[lll]^{x_1+x_2+x_4+x_5}\ar@{->}[uuuu]_{x_3} }$$}
\end{center}
\caption{Promotion graph of the poset from Figure~\ref{posetex}. Self-loops are omitted. Instead of multiple edges between vertices we have drawn only one edge with edge weights added.}
\label{Pgforexample}
\end{figure}

With the lexicographic ordering of the elements of $\mathcal{L}(P)$, the transition matrix of the promotion Markov chain is
{\tiny\[M^P = \begin{pmatrix} x_6 & x_3 + x_4 + x_5 & 0 & x_1 + x_2 & 0 & 0 \\ x_3 + x_4 + x_6 & x_5 & x_1 + x_2 & 0 & 0 & 0 \\ 0 & x_3 & x_6 & x_2 + x_4 + x_5 & 0 & x_1 \\ x_3 & 0 & x_2 + x_4 + x_6 & x_5 & x_1 & 0 \\ 0 & x_3 & 0 & 0 & x_6 & x_1 + x_2 + x_4 + x_5 \\ x_3 & 0 & 0 & 0 & x_1 + x_2 + x_4 + x_6 & x_5 \end{pmatrix}.\]}
\end{exa}

 A \emph{rooted tree} is a connected poset in which each vertex has at most one successor. A union of rooted trees is called a \emph{rooted forest}. An \emph{upset} (or \emph{upper set}) $S$ in a poset is a subset such that if $x \in S$ and $y \succeq x$, then $y \in S$.  Consider a poset $P$ with minimal element $\hat{0}$ and maximal element $\hat{1}$, then for each element $x \in P$, the \emph{derangement number} of $x$~\cite{brown2000semigroups} is
\[d_x = \displaystyle \sum_{y \succeq x } \mu(x,y)f([y, \hat{1}]),\]
where $f([y, \hat{1}])$ is the number of maximal chains in the interval $[y, \hat{1}]$ and $\mu$ is the M{\"o}bius function~\cite{sta97}. One poset with the corresponding derangement numbers is given later in Example~\ref{relaxex}.  

 
One of the main results in \cite{cmc} is that for a rooted forest $P$, the characteristic polynomial of $M^P$  factors into linear terms. 
\begin{thm}\label{forest}\cite{cmc} Let $P$ be a rooted forest of size $n$ and let $M^{P}$ be the transition matrix of the promotion Markov chain. Then \[\det(M - \lambda I_n) = \prod_{ \overset{S\subseteq [n]}{S \text{ upset in }P}} (\lambda - x_{S})^{d_S},\] where $x_S = \sum_{i \in S} x_i$ and $d_S$ is the derangement number in the lattice $L$ (by inclusion) of upsets in $P$. 
\end{thm}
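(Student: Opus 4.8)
The plan is to realize $M^P$ as an element of the algebra of its transition monoid and to exploit the fact that, for a rooted forest, this monoid is rigid enough to force simultaneous triangularizability. First I would rewrite the chain in operator form: for each element $a\in[n]$ and each $\pi\in\mathcal{L}(P)$ let $\Phi_a\pi=\partial_i\pi$, where $i$ is the position of $a$ in $\pi$, so that the edge $\pi\to\Phi_a\pi$ of $G_P$ carries weight $x_a$. Each $\Phi_a$ is then the $0$--$1$ matrix of an honest function $\mathcal{L}(P)\to\mathcal{L}(P)$, and $M^P=\sum_{a=1}^n x_a\Phi_a$. Since the $x_a$ are free parameters, it suffices to produce a single flag of $\mathbb{C}[\mathcal{L}(P)]$ in which every $\Phi_a$ is simultaneously upper triangular; the diagonal entry of each $\Phi_a$ on a given one-dimensional graded piece is $0$ or $1$, and summing against $x_a$ reads off a linear eigenvalue of the form $x_S$ on that piece.

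The structural heart is the second step: showing that the transition monoid $\mathcal{M}$ generated by the $\Phi_a$ is $\mathcal{R}$-trivial when $P$ is a rooted forest. Here I would analyze promotion explicitly --- in a rooted forest each element has a unique successor, so promoting $a$ slides it upward past the elements incomparable to it into a canonical position --- and show that long words in the $\Phi_a$ stabilize: once the letters of a set have been promoted enough, further promotions of those letters no longer change the image. This stabilization produces an idempotent power $w^{\omega}$ for every word $w$ and shows that the right Cayley graph of $\mathcal{M}$ has no nontrivial cycles, i.e.\ $\mathcal{M}$ is $\mathcal{R}$-trivial. I would then identify the resulting idempotents (equivalently the $\mathcal{L}$-classes) with the upsets $S$ of $P$: the idempotent obtained by repeatedly promoting exactly the letters of $[n]\setminus S$ projects onto the linear extensions whose top block is the upset $S$. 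The representation theory of $\mathcal{R}$-trivial monoids then supplies the simultaneous upper-triangular form sought in the first step.

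With the flag in hand, the eigenvalue attached to the graded piece indexed by an upset $S$ is $\sum_{a\in S}x_a=x_S$, since the letters of $S$ act trivially and contribute $1$ on the diagonal while the letters of $[n]\setminus S$ move the extension and contribute $0$. It remains to compute the multiplicity of $x_S$. For this I would count, for each upset $T$, the rank of the projection attached to $T$; this rank is a cumulative sum $\sum_{S\subseteq T}$ of the multiplicities I am after, and M\"obius inversion over the lattice $L$ of upsets of $P$ then isolates $m_S=\sum_{T\supseteq S}\mu(S,T)f([T,\hat 1])=d_S$, matching the statement. As a consistency check, when $P$ is an antichain every subset is an upset, $L$ is Boolean, and the $d_S$ reduce to the classical derangement numbers of the Tsetlin library.

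The main obstacle is the $\mathcal{R}$-triviality in the second step: pinning down precisely where promotion sends an element in a rooted forest and proving that the images of long words stabilize, so that no nontrivial recurrence survives in the right Cayley graph. The rooted-forest hypothesis is exactly what is needed here --- uniqueness of successors is what gives each promoted element a canonical landing position --- and this is where a general poset would break down. By contrast, the multiplicity bookkeeping in the third step is routine once the projection ranks are expressed as chain counts in $L$, although it does require care to match the M\"obius computation to the combinatorial definition of the derangement numbers.
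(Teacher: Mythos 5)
This theorem is not proved in the paper itself --- it is quoted from Ayyer--Klee--Schilling \cite{cmc} --- and your proposal reconstructs essentially the proof given there and in \cite{ayyer2015markov}: write $M^P=\sum_a x_a\Phi_a$, show the promotion monoid of a rooted forest is $\mathcal{R}$-trivial, obtain simultaneous triangularization from the representation theory of $\mathcal{R}$-trivial monoids with diagonal entries $x_S$ indexed by upsets $S$, and compute multiplicities by M\"obius inversion over the lattice of upsets, which is exactly the route the present paper attributes to those references. One bookkeeping slip worth fixing: for your final inversion $m_S=\sum_{T\supseteq S}\mu(S,T)f([T,\hat{1}])=d_S$ to be the correct one, the cumulative identity must be $f([T,\hat{1}])=\sum_{S\supseteq T}m_S$ (the chain count attached to $T$ accumulates the multiplicities of upsets \emph{containing} $T$, i.e.\ the linear extensions of $P\setminus T$), not $\sum_{S\subseteq T}m_S$ as written.
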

A linear extension $\pi$ of a naturally labeled poset is called a \emph{poset derangement} if it has no fixed points when considered as a permutation. Let $\mathfrak{d}_P$ be the number of poset derangements of the naturally labeled poset $P$.  If $P$ is a union of chains, the eigenvalues of $M^{P}$ have an alternate description. 
\begin{thm}\label{chain}\cite{cmc} Let $P=[n_1]+[n_2] + \cdots +[n_k]$ be a union of chains of size $n$ whose elements are labeled consecutively within chains. Then $$\det(M - \lambda I_n) = \displaystyle \prod_{\underset{S \text{ upset in } P}{S \subseteq [n]}} (\lambda - x_S)^{\mathfrak{d}_{P\setminus S}}$$
where $\mathfrak{d}_\emptyset =1$. 
\end{thm}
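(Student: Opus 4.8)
The plan is to derive Theorem~\ref{chain} from Theorem~\ref{forest} by reconciling the two different descriptions of the exponents. Since a union of chains $P = [n_1] + \cdots + [n_k]$ is in particular a rooted forest (each chain being a path, where every vertex has at most one successor), Theorem~\ref{forest} already applies and gives the factorization with exponents $d_S$, the derangement numbers in the lattice $L$ of upsets of $P$ ordered by inclusion. So the entire content to be proved is the combinatorial identity
\[
d_S = \mathfrak{d}_{P \setminus S}
\]
for every upset $S$ of $P$, where the left side is the M\"obius-theoretic derangement number in the upset lattice and the right side counts poset derangements of the naturally labeled complementary poset $P \setminus S$. Because the factorizations in the two theorems run over the same index set (upsets $S \subseteq [n]$) with the same linear factors $(\lambda - x_S)$, matching the exponents termwise is both necessary and sufficient.

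First I would set up the upset lattice $L$ explicitly for a union of chains. An upset $S$ of $[n_1] + \cdots + [n_k]$ is determined by choosing, independently in each chain, a final segment (the top $j$ elements for some $0 \le j \le n_i$); hence $L$ is a product of chains $L \cong C_{n_1+1} \times \cdots \times C_{n_k+1}$, where $C_{m}$ denotes the chain with $m$ elements. The interval $[S, \hat 1]$ in $L$ is itself a product of smaller chains, so both its M\"obius function and its number of maximal chains factor across components. I would then unwind the definition $d_S = \sum_{T \supseteq S} \mu(S,T)\, f([T,\hat 1])$. On a product of chains the M\"obius function $\mu(S,T)$ is a product of factors each equal to $1$, $-1$, or $0$ (nonzero only when $T$ sits at most one step above $S$ in each coordinate), and $f([T,\hat 1])$ is a multinomial coefficient counting maximal chains (interleavings) in the product. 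This reduces $d_S$ to a concrete alternating sum.

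Next I would compute the right-hand side $\mathfrak{d}_{P \setminus S}$ directly. The complement $P \setminus S$ is again a union of chains, namely the bottom segments of each $[n_i]$ left after removing the chosen final segment; with the natural consecutive labeling, a linear extension of a union of chains is an interleaving of the chains, and counting those with no fixed point is a classical inclusion-exclusion. I would carry out that inclusion-exclusion over the set of fixed points and show the resulting alternating sum matches, term by term, the alternating sum obtained for $d_S$. Equivalently, I expect both quantities to equal the same determinant-type or permanent-type expression, so a clean route is to prove the generating-function identity: the number of derangements of a union of chains has an inclusion-exclusion expansion whose signs and multinomial weights coincide exactly with the M\"obius expansion of $d_S$ on the product-of-chains lattice. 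A cleaner alternative, which I would try first, is to invoke the known combinatorial interpretation of derangement numbers in products of chains (as in~\cite{brown2000semigroups}) to identify $d_S$ with a count of derangement-type objects, and then give an explicit bijection between those objects and the poset derangements of $P \setminus S$.

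The main obstacle will be establishing the termwise equality of the two alternating sums, i.e.\ proving $d_S = \mathfrak{d}_{P\setminus S}$ rather than merely that the full products agree. Two separate subtleties arise: correctly identifying which complementary poset $P \setminus S$ corresponds to a given upset $S$ (and checking the natural labeling is inherited consistently), and matching the M\"obius signs from the product-of-chains lattice against the inclusion-exclusion signs from the derangement count. I would handle the first by the product structure above, reducing everything to a single chain and then taking products; for a single chain $[m]$ the identity degenerates to the statement that $d_S$ equals $1$ when $P\setminus S = \emptyset$ and equals the appropriate small derangement count otherwise, which is easily verified by hand and supplies the base case. The multiplicativity of both sides across chain components then lifts the single-chain identity to the general union of chains, completing the argument.
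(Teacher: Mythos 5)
This statement is quoted by the paper from~\cite{cmc} as background and is given no proof in the paper, so I can only assess your argument on its own terms. Your reduction is legitimately set up: a union of chains is indeed a rooted forest, distinct upsets $S$ give distinct linear forms $x_S$ (hence distinct linear factors of the characteristic polynomial, so exponents may be matched termwise), and Theorem~\ref{forest} therefore reduces the statement to the single combinatorial identity $d_S = \mathfrak{d}_{P\setminus S}$ for every upset $S$. Your description of the upset lattice as a product of chains, of the M\"obius function on it, and of $f([T,\hat{1}])$ as a multinomial coefficient is also correct.

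The gap is in how you close. Your preferred route --- verify the identity on a single chain and then invoke ``the multiplicativity of both sides across chain components'' --- fails, because neither side is multiplicative over disjoint unions. For a single chain $[m]$ with $m\geq 1$, both $d_S$ and $\mathfrak{d}_{[m]\setminus S}$ vanish for every upset $S\neq [m]$, so any componentwise product formula would return $0$ for every proper upset of a union of chains. But take $P=[1]+[1]$ and $S=\emptyset$: then $\mathfrak{d}_P=1$ (the extension $21$), and in the Boolean lattice $B_2$ one computes $d_\emptyset = 2-1-1+1 = 1$, not $0$. The failure is structural: maximal chains in a product of chains are shuffles (this is the multinomial coefficient you yourself identify, contradicting your earlier claim that the number of maximal chains ``factors across components''), and linear extensions of a disjoint union are shuffles of the components' extensions, so both quantities mix the components inextricably. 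With that shortcut gone, your proof rests entirely on the middle paragraph's promise to carry out the inclusion--exclusion over fixed points and match it term by term against the M\"obius expansion $d_S=\sum_{\epsilon\in\{0,1\}^k}(-1)^{|\epsilon|}\binom{M-|\epsilon|}{m_1-\epsilon_1,\ldots,m_k-\epsilon_k}$ --- but that matching is precisely the substantive content of the theorem, and it is asserted rather than done. As it stands, the proposal is a correct reduction plus an invalid lifting argument, not a proof.
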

The work of Ayyer et al. doesn't fully classify the posets with nice properties. For example, the poset from Figure~\ref{ladder} has eigenvalues $x_{1}+x_{2}+x_{3}+x_{4}, 0, x_{3}+x_{4}, -(x_{1}+x_{2})$. Notice that, unlike in the case of forests, some of the eigenvalues contain negative coefficients. In view of this, they made the following conjecture.
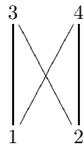
\begin{figure}[h]
\begin{center}
\scalebox{0.7}{$$\xymatrix{ 3\ar@{-}[dd]\ar@{-}[ddr] & 4\ar@{-}[dd]\ar@{-}[ddl] \\ & \\  1 & 2 }$$}
\end{center}
\caption{A  ladder of rank 2.}
\label{ladder}
\end{figure}

If $x \prec y$ we say that $y$ is a \emph{successor} of $x$.
\begin{conj}\label{conj}\cite{mcpo}
Let $P$ be a poset of size $n$ which is not a down forest and $M^{P}$ be its promotion transition matrix. If $M^{P}$ has eigenvalues which are linear in the parameters $x_{1}, \ldots, x_{n}$, then the following hold
\begin{enumerate}[(1)]
\item \label{cone} the coefficients of the parameters in the eigenvalues are only one of $\pm 1$,
\item \label{ctwo} each element of $P$ has at most two successors,
\item \label{cthree} the only parameters whose coefficients in the eigenvalues are $-1$ are those which either have two successors or one of whose successors has two successors.
\end{enumerate}
\end{conj}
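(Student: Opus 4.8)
The plan is to prove the three conditions as necessary conditions by contraposition: assuming $M^P$ has all eigenvalues linear in $x_1, \ldots, x_n$, I will extract the stated structural constraints, and, going the other way, show that each forbidden local configuration forces a non-linear eigenvalue. The starting observation is that every entry of $M^P$ is a sum of a subset of the $x_i$, so $\det(\lambda I - M^P)$ is a monic polynomial in $\lambda$ with coefficients in the unique factorization domain $\mathbb{Z}[x_1, \ldots, x_n]$. Saying the eigenvalues are linear means this polynomial splits over the fraction field $\mathbb{Q}(x_1, \ldots, x_n)$ as $\prod_k (\lambda - \ell_k)$ with each $\ell_k = \sum_i c_{k,i} x_i$ a linear form; a Gauss's-lemma argument then forces the coefficients $c_{k,i}$ to be integers, which is the first step toward condition (\ref{cone}). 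Crucially, this splitting is preserved under any specialization of the parameters (setting some $x_i$ to $0$, or identifying several of them), and this is the main tool for detecting violations.

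For condition (\ref{ctwo}) I would argue by contraposition and analyze the minimal obstruction, an element $v$ with three pairwise-incomparable successors. The idea is to choose a specialization of the $x_i$ --- collapsing the part of $P$ away from $v$ by sending the corresponding parameters to $0$ or merging chains --- under which the promotion chain restricts to a small, explicitly computable matrix built from the local configuration at $v$. For this three-successor gadget I would compute the characteristic polynomial directly and exhibit an irreducible quadratic-in-$\lambda$ factor over $\mathbb{Q}(x_1, \ldots, x_n)$, that is, an eigenvalue involving a square root of the parameters, contradicting the linear splitting that specialization is supposed to preserve. The same specialize-and-compute strategy, seeded by the rank-$2$ ladder whose spectrum is recorded just before the conjecture, is what I would use to nail down conditions (\ref{cone}) and (\ref{cthree}): the ladder is the prototype producing a coefficient $-1$, and its two minimal elements are exactly elements with two successors, which is precisely the situation described in (\ref{cthree}).

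To establish (\ref{cthree}) in full I would track, for each parameter $x_v$, which local configurations around $v$ can contribute a $-1$. Since promotion acts through the sorting operators $\tau_i$, the sign with which $x_v$ enters an eigenvalue should be governed by how the swaps at $v$ interact with its successors, and I expect a $-1$ to be generated only by a ladder-type crossing of two successors --- either directly above $v$, so that $v$ has two successors, or one level higher, so that a successor of $v$ has two successors. Combined with the integrality from the first step and a bound on coefficient sizes coming from the trace of $M^P$ and its low powers (which are controlled sums of the $x_i$), this should force every nonzero coefficient to be $\pm 1$ and localize the $-1$'s as claimed.

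The hard part will be the persistence step: the characteristic polynomial is a global object depending on all of $\mathcal{L}(P)$, so a non-linear eigenvalue produced by a local gadget need not survive in the full poset unless the chosen specialization genuinely decouples the gadget from the rest of $P$. Proving that setting selected parameters to $0$ (or merging elements) projects the Markov chain onto a chain on a smaller poset that still contains the forbidden configuration, and that linear splitting of the large characteristic polynomial descends to the small one, is where the real difficulty lies. An alternative route that might sidestep this obstacle is representation-theoretic: linear eigenvalues for rooted forests follow from $\mathcal{R}$-triviality of the promotion monoid and the ladder gives a mild non-$\mathcal{R}$-trivial extension, so one could instead try to classify exactly which promotion monoids retain the linear-spectrum property and read the three conditions off that classification.
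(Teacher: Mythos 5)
You have set out to prove a statement that the paper does not prove and that cannot be proved: this is Conjecture~\ref{conj}, due to Ayyer, Klee, and Schilling~\cite{mcpo}, quoted in the paper as an open conjecture. The paper explicitly states that its results ``give further support to \eqref{cone} and \eqref{ctwo} from Conjecture~\ref{conj}, but show that~\eqref{cthree} is not true.'' Indeed, Example~\ref{relaxex} is a counterexample to part~\eqref{cthree}: for the poset $P' = P \setminus \{(5,6)\}$ (Figure~\ref{figex}), the matrix $M^{P'}$ has the eigenvalue $-(x_1+x_2+x_3+x_4)$, in which $x_1$ carries coefficient $-1$, yet the element $1$ has a single successor ($2$), and $2$ in turn has a single successor ($4$); so neither condition in~\eqref{cthree} holds for the element $1$. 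Consequently any argument purporting to establish~\eqref{cthree} must contain an error, and parts~\eqref{cone} and~\eqref{ctwo} remain open, so a short proof sketch of the kind you propose should be treated with great suspicion.

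Your third paragraph is precisely where the counterexample bites: you posit that a coefficient $-1$ on $x_v$ ``should be generated only by a ladder-type crossing of two successors --- either directly above $v$ \ldots or one level higher.'' Theorem~\ref{generalfactor} of the paper shows this localization fails: when a covering relation $a \prec b$ is broken, the new eigenvalue is $\sum_{k \npreceq_P b} c_k^{\mathfrak s} x_k - \sum_{k \prec_P a} c_k^{\mathfrak s} x_k$, so the sign $-1$ is inherited by \emph{every} element below $a$, arbitrarily far from the ladder. A secondary but real gap is your ``persistence step'' itself: setting some $x_i$ to $0$ does not project the promotion chain onto the chain of a smaller poset --- the state space $\mathcal L(P)$ and the operators $\partial_i$ are unchanged --- so specialization does not decouple a local gadget from the rest of $P$, and linear splitting of the specialized characteristic polynomial gives no information about a sub-configuration's own transition matrix. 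If you want a positive contribution in this direction, the productive route is the one the paper takes: prove linearity of the spectrum for specific families (ordinal sums of forests and ladders, via Theorem~\ref{generalfactor}) and use those computations to test, support, or refute the individual parts of the conjecture, rather than attempt the full classification.
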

Even though we have not managed to fully classify the posets with nice properties, our results give further support to $\eqref{cone}$ and $\eqref{ctwo}$ from Conjecture~\ref{conj}, but show that~\eqref{cthree} is not true (Example~\ref{relaxex}).  

Let $P$ and $Q$ be two posets. The \emph{direct sum} of $P$ and $Q$  is the poset $P + Q$ on their disjoint union such that $x \preceq y$ in $P+Q$ if either (a) $x, y \in P$ and $x \preceq y$ in $P$ or (b) $x, y \in Q$ and $x \preceq y$ in $Q$. The \emph{ordinal sum} of $P$ and $Q$ is the poset $P \oplus Q$  on their disjoint union such that $x \preceq y$ in $P \oplus Q$ if (a) $x, y \in P$ and $x \preceq y$ in $P$, or (b) $x, y \in Q$ and $x \preceq y$ in $Q$,  or (c) $x \in P$ and $y \in Q$.
We will say that the poset $P$ is a \emph{ladder} of rank $k$ if $P = Q_1 \oplus \cdots \oplus Q_k$ where $Q_{i}$ is an antichain of size 1 or 2 for all $i= 1, \ldots, k$. For example, the poset from Figure~\ref{ladder} is a ladder of rank 2, while the poset from Figure~\ref{posetex} is an ordinal sum of a forest on $\{1,2,3\}$ and a ladder on $\{4,5,6\}$.

Our main result is the following.
\begin{thm}\label{forestladdereig}
Let $F_i$ be a rooted forest and let $L_i$ be a ladder for $i =1, \ldots, k$. The eigenvalues of the promotion transition matrix $M^P$ for $P = F_1 \oplus L_1 + \cdots  + F_k \oplus L_k $ are linear in $x_1, \ldots, x_n$. Moreover, they can be explicitly computed using the formula for the eigenvalues of forests (Theorem~\ref{forest}) and Theorem~\ref{generalfactor}.
\end{thm}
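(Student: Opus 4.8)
The plan is to reduce the theorem to two already-available building blocks and a multiplicativity principle for the spectrum under direct sums. First, I would observe that because $P = F_1 \oplus L_1 + \cdots + F_k \oplus L_k$ is a \emph{direct sum} of its connected components $C_i = F_i \oplus L_i$, a linear extension of $P$ is obtained by interleaving (shuffling) linear extensions of the individual components while preserving the internal order of each. Consequently the promotion action on $\mathcal{L}(P)$ should decompose in a way that lets me write $M^P$ as a combination of the component transition matrices $M^{C_i}$. I would make this precise by showing that the promotion Markov chain on a direct sum is, up to the bookkeeping of the shuffle, a ``product'' of the chains on the components, so that the spectrum of $M^P$ is obtained by summing eigenvalues across components: every eigenvalue of $M^P$ has the form $\mu_1 + \cdots + \mu_k$ where $\mu_i$ runs over eigenvalues of $M^{C_i}$ (with appropriate multiplicities coming from the shuffle count). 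The key technical point here is that the parameters $x_j$ for $j$ in different components are disjoint, so linearity in the $x_j$ is preserved under this additive combination.

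Next, the heart of the matter is a single component $C = F \oplus L$, an ordinal sum of a rooted forest $F$ and a ladder $L$. Here I would exploit the ordinal-sum structure: in any linear extension of $F \oplus L$, all elements of $F$ must precede all elements of $L$. I expect the promotion operators $\partial_i$ to split into those acting ``within $F$,'' those acting ``within $L$,'' and the boundary interactions. Because $F$ sits entirely below $L$, the action on the $L$-portion should be insensitive to which linear extension of $F$ was chosen, and vice versa, so I would try to establish a tensor/block decomposition $M^{F \oplus L}$ relating to $M^F$ (handled by Theorem~\ref{forest}) and to the single-ladder transition matrix (to be analyzed as promised in Section~\ref{S:oneladder}). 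Theorem~\ref{generalfactor}, referenced in the statement, is presumably the explicit computation for the ladder (or for the ordinal sum with a ladder on top), so I would invoke it as the black box supplying the ladder eigenvalues, which the excerpt already indicates are linear in the $x_i$ (e.g.\ $x_3+x_4$ and $-(x_1+x_2)$ for the rank-2 ladder).

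With these two pieces in hand, the argument assembles as follows: the eigenvalues of each component $M^{C_i} = M^{F_i \oplus L_i}$ are linear in the $x_j$ by combining Theorem~\ref{forest} (linear eigenvalues $x_S$ for the forest part) with Theorem~\ref{generalfactor} (linear eigenvalues for the ladder part), and then the direct-sum principle yields that the eigenvalues of $M^P$ are sums of these linear quantities, hence still linear in $x_1, \ldots, x_n$. I would also track multiplicities throughout, since the shuffle structure multiplies them and the theorem claims an explicit computation, not merely existence.

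The main obstacle I anticipate is justifying the clean spectral decomposition across the ordinal sum $F \oplus L$ within one component. While the direct-sum-to-sum-of-eigenvalues step is conceptually standard (it reflects a tensor-product structure on the shuffled linear extensions), the \emph{ordinal} sum is subtler: the promotion operator $\partial_i = \tau_{n-1}\cdots\tau_i$ sweeps all the way to the top of the component, so a move initiated in the forest part can propagate through the ladder part, coupling the two blocks. I would need to verify carefully that this coupling still respects a triangular or block structure compatible with the factorization of the characteristic polynomial---most likely by arguing that the ladder elements, lying above all forest elements, are never swapped with forest elements under any $\tau_i$, so the induced action on the ladder coordinates depends only on the ladder data. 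Making this rigorous, and correctly counting the resulting multiplicities so that the ``explicit'' claim of the theorem holds, is where I expect the real work to lie.
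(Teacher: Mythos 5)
Your opening reduction---the ``direct-sum principle''---is false, and it is the load-bearing step of your plan. Take $P$ to be the two-element antichain, i.e.\ $k=2$ with each component a single point (a point is both a forest and a ladder, so this lies in the class of the theorem). Then $\mathcal{L}(P)=\{12,21\}$ and
\[
M^{P}=\begin{pmatrix} x_2 & x_1\\ x_2 & x_1\end{pmatrix},
\]
whose eigenvalues are $x_1+x_2$ and $0$. Each component matrix is $1\times 1$ with sole eigenvalue $x_1$, respectively $x_2$, so the only sum of component eigenvalues is $x_1+x_2$: the eigenvalue $0$ is not of your claimed form, and no choice of ``shuffle multiplicities'' can repair a wrong eigenvalue set. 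The conceptual reason is that the promotion chain on a disjoint union is not a product chain: the operators act on the shuffled word, promoting an element of one component permutes the interleaving of all of them, and $M^P$ admits no tensor factorization over components. This is visible already in Theorem~\ref{forest}: the multiplicity of $x_S$ is the derangement number $d_S$ computed in the lattice of upsets of the \emph{whole} poset $P$, and these numbers do not factor over components---upsets that have multiplicity zero in the components can union to an upset of positive multiplicity in $P$, which is exactly what produces the eigenvalue $0$ above.

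You have also misidentified Theorem~\ref{generalfactor}: it is not a formula for ladder eigenvalues, but a covering-relation-deletion theorem. It takes a poset of the form $P=Q'\oplus a\oplus b\oplus Q''+P_2$ and relates the spectrum of $M^{P}$ to that of $M^{P'}$ for $P'=P\setminus\{(a,b)\}$, the poset with the covering relation $a\prec b$ deleted: under its hypotheses (the matrices $G_i$ are simultaneously upper-triangularizable and $M^P$ has the upset property), each eigenvalue of $M^{P}$ splits into two explicitly given linear eigenvalues of $M^{P'}$, and the hypotheses persist for $P'$. The paper's proof of Theorem~\ref{forestladdereig} is therefore the reverse of what you propose: replace each ladder $L_i$ by a chain $C_{n_i}$ to obtain the rooted forest $P'=F_1\oplus C_{n_1}+\cdots+F_k\oplus C_{n_k}$; Theorem~\ref{forest} together with the $\mathcal{R}$-triviality results of~\cite{cmc} shows that this forest satisfies the hypotheses; then recover $P$ by breaking the covering relations inside the chains one at a time, applying Theorem~\ref{generalfactor} at each step. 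The ``$+P_2$'' in its hypothesis is precisely what makes any decomposition over connected components unnecessary: the other components are simply carried along as $P_2$. Your tensor idea for a single component does have a germ of truth---Lemma~\ref{matrixformlem} and Corollary~\ref{cordecomp} are such decompositions---but the paper exploits them only for pure ladders, where explicit eigenvectors can be written down; for a general forest $F$ the matrix $M^{F}$ need not be diagonalizable and the noncommuting tensor summands give no direct access to the spectrum of $M^{F\oplus L}$, which is why the edge-breaking route is taken instead.
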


The idea behind our proof is that the poset $P = F_1 \oplus L_1 + \cdots  + F_k \oplus L_k$ with $|L_{i}|=n_{i}$ can be obtained  by starting with a poset $P'=F_1 \oplus C_{n_{1}} + \cdots  + F_k \oplus C_{n_{k}}$, where $C_{i}$ is a chain of size $i$, and breaking covering relations in the chains $C_{i}$ one by one. In Theorem~\ref{generalfactor}, we show how the eigenvalues of the intermediary posets are related. Notice that $P'$ is a forest. Therefore, using Theorem~\ref{generalfactor}, the eigenvalues of $M^{P}$ and their multiplicities can be obtained from the eigenvalues of $M^{P'}$ given by Theorem~\ref{forest}. If $P$ is just a union of ladders, as a starting point one could use the simpler description of the eigenvalues and their multiplicities for a union of chains given in  Theorem~\ref{chain}.

A Markov chain is said to be \emph{irreducible} if the associated directed graph is strongly connected. In addition, it is said to be \emph{aperiodic} if the greatest common divisor of the lengths of all possible loops from any state to itself is one. For irreducible aperiodic chains, the Perron-Frobenius theorem guarantees that there is a unique stationary distribution. Ayyer et al.~\cite{cmc} showed that the promotion Markov chain is irreducible and aperiodic and obtained the following result about its stationary distribution.

\begin{thm}\cite{cmc}\label{stationary}
The stationary state weight of the linear extension $\pi \in  \mathcal{L}(P)$ for the discrete-time Markov chain for the promotion graph
is proportional to 
 \[ w(\pi) = \displaystyle \prod_{i=1}^n \frac{1}{x_{\pi_1}+ \cdots + x_{\pi_i}}.\]
\end{thm}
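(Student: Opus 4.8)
The plan is to verify directly that the weight $w$, suitably normalized, is the stationary distribution by checking that the row vector $(w(\pi))_{\pi\in\mathcal{L}(P)}$ is a left fixed vector of the row-stochastic matrix $M^P$, that is, $\sum_{\pi}w(\pi)\,M^P_{\pi,\sigma}=w(\sigma)$ for every $\sigma\in\mathcal{L}(P)$. Since the chain is irreducible and aperiodic, Perron--Frobenius guarantees the stationary distribution is unique up to scaling, so this identity is all that is needed. The first observation I would exploit is that each $\tau_i$ is an involution of $\mathcal{L}(P)$, so every $\partial_i=\tau_{n-1}\cdots\tau_i$ is a \emph{bijection} of $\mathcal{L}(P)$. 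Because $M^P_{\pi,\sigma}=\sum_{i:\,\partial_i\pi=\sigma}x_{\pi_i}$, grouping the inflow into $\sigma$ by the index $i$ replaces the sum over preimages by a single term per $i$:
\[ \sum_{\pi} w(\pi)\,M^P_{\pi,\sigma} \;=\; \sum_{i=1}^n w(\partial_i^{-1}\sigma)\,x_{(\partial_i^{-1}\sigma)_i}. \]
Thus the whole statement reduces to showing that this sum of exactly $n$ terms equals $w(\sigma)$.

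Next I would analyze the inverse operators through the recursion $\partial_i^{-1}=\tau_i\,\partial_{i+1}^{-1}$. Writing $\rho^{(i)}=\partial_i^{-1}\sigma$, this yields $\rho^{(i)}=\tau_i\rho^{(i+1)}$ with $\rho^{(n)}=\sigma$, so the preimages form a chain produced by leftward adjacent transpositions. Since $\partial_{i+1}^{-1}$ only touches positions $\ge i+1$, the prefix $\rho^{(i+1)}_1\cdots\rho^{(i+1)}_i$ equals $\sigma_1\cdots\sigma_i$; comparing $\sigma_i$ with the carried letter $\rho^{(i+1)}_{i+1}$ then shows that $\rho^{(i)}_i=\sigma_{m(i)}$, where $m(n)=n$ and $m(i)=i$ if $\sigma_i\prec\sigma_{m(i+1)}$ while $m(i)=m(i+1)$ otherwise. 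Setting $P_k=x_{\sigma_1}+\cdots+x_{\sigma_k}$, the crucial bookkeeping is that the partial sums of $\rho^{(i)}$ agree with those of $\sigma$ in positions $<i$, agree with those of $\rho^{(i+1)}$ in positions $>i$, and in position $i$ equal $P_{i-1}+x_{\sigma_{m(i)}}$. Consequently $w(\rho^{(i)})=A_i\,T_i$, where $A_i=\prod_{k=1}^{i-1}P_k^{-1}$ and the tail product $T_i=\prod_{k=i}^{n}\big(\text{$k$-th partial sum of }\rho^{(i)}\big)^{-1}$ obeys the one-step recursion $T_i=T_{i+1}/(P_{i-1}+x_{\sigma_{m(i)}})$, with $T_{n+1}=1$.

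Finally I would carry out the telescoping. Writing $x_{\sigma_{m(i)}}=(P_{i-1}+x_{\sigma_{m(i)}})-P_{i-1}$ turns the $i$-th summand $w(\rho^{(i)})\,x_{(\partial_i^{-1}\sigma)_i}$ into $A_i(T_{i+1}-P_{i-1}T_i)$, and the two elementary identities $A_iP_{i-1}=A_{i-1}$ (with $A_0=0$) and $T_i(P_{i-1}+x_{\sigma_{m(i)}})=T_{i+1}$ cause the sum to collapse to the single surviving boundary term $A_n\,T_{n+1}=A_n=\prod_{k=1}^{n-1}P_k^{-1}$. Since $P_n=1$, this is exactly $w(\sigma)=\prod_{k=1}^{n}P_k^{-1}$, completing the verification.

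The main obstacle is the middle step: pinning down the combinatorial action of the inverse promotion $\partial_i^{-1}$. The operator $\partial_i$ does \emph{not} merely slide $\pi_i$ to the right --- once the moving letter meets a larger one it is dropped and a new letter becomes active --- so one must argue carefully that for $\partial_i^{-1}$ the positions below $i$ stay frozen and that the letter landing in position $i$ is governed by the running rule defining $m(i)$. Once this description is secured, the partial-sum bookkeeping and the telescoping are essentially forced. A cleaner but less elementary alternative would be to recognize $w(\pi)=\int_{u_1\ge\cdots\ge u_n\ge 0}\exp\!\big(-\sum_{\ell}x_{\pi_\ell}u_\ell\big)\,du$ and to seek a probabilistic ``times of last promotion'' interpretation in the spirit of the Tsetlin library proof, but making the poset constraints compatible with such an argument appears no less delicate than the direct telescoping above.
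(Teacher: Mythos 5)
Your proposal cannot be compared against the paper's own argument for a simple reason: the paper gives none. Theorem~\ref{stationary} is stated with a citation to~\cite{cmc} and is imported as a black box (it is only used later, in Section~\ref{S:convergencesec}); so your proof has to stand on its own, and it does. The reduction of stationarity to $\sum_{i=1}^n w(\partial_i^{-1}\sigma)\,x_{(\partial_i^{-1}\sigma)_i}=w(\sigma)$ is valid because each $\tau_j$ is an involution on $\mathcal{L}(P)$, hence each $\partial_i$ is a bijection, and the chain's irreducibility (also quoted from~\cite{cmc}) gives uniqueness. The combinatorial heart of your argument --- that $\partial_i^{-1}=\tau_i\tau_{i+1}\cdots\tau_{n-1}$ (with $\tau_{n-1}$ applied first) freezes positions $1,\dots,i-1$ at $\sigma_1,\dots,\sigma_{i-1}$ and deposits $\sigma_{m(i)}$ in position $i$ --- is correctly justified by your downward induction: the sweep defining $\rho^{(i+1)}$ only involves $\tau_j$ with $j\ge i+1$, so the prefix is frozen, and at step $i$ the pair $(\sigma_i,\sigma_{m(i+1)})$ occupies adjacent positions of a linear extension, so comparability can only mean $\sigma_i\prec\sigma_{m(i+1)}$, which is exactly your case split for $m(i)$. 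Given that, the partial-sum bookkeeping and the telescoping (via $A_iP_{i-1}=A_{i-1}$, $T_i\bigl(P_{i-1}+x_{\sigma_{m(i)}}\bigr)=T_{i+1}$, and $P_n=\sum_i x_i=1$, so the sum collapses to $A_nT_{n+1}=w(\sigma)$) are routine and correct; I checked them, including the boundary terms $i=1$ and $i=n$. Two cosmetic remarks: writing $A_0=0$ is an abuse (an empty product is $1$); what you actually use is $A_1P_0=0$, which is fine. Also, without the normalization $\sum_i x_i=1$ the identical computation yields $wM^P=(x_1+\cdots+x_n)w$, consistent with the row sums of $M^P$ being $x_1+\cdots+x_n$, so the normalization enters only at the last step. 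In short: a correct, self-contained, elementary verification of a result the paper merely cites.
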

These weights do not necessarily sum up to 1, which is remedied by multiplication by a suitable factor $Z_{P}$, known as the \emph{partition function}. In~\cite{cmc}, the authors found $Z_{P}$  and in~\cite{mcpo} they derived results about convergence to stationarity for rooted forests. In Section~\ref{S:convergencesec}, we describe the partition function when $P=F_1 \oplus L_1 + \cdots  + F_k \oplus L_k$ is a union of ordinal sums of forests and ladders, and derive convergence results for the case when $P=F \oplus L$.

\section{The case of one ladder} \label{S:oneladder}



In this section we show that when $P$ is a ladder, the promotion transition matrix $M^{P}$ is diagonalizable and we explicitly describe its eigenvalues and eigenfunctions. We note that in general, $M^{P}$ is not diagonalizable if $P$ is a forest or a union of two or more ladders. Let $I_n$ denote the identity  matrix of size $n$ and $J_{n}$ be the anti-diagonal matrix of size $n$
\[J_n = \begin{pmatrix}  0 & 0 & 1 \\ 0 & \udots  & 0 \\ 1 & 0  & 0 \end{pmatrix}.\]

\begin{lem}\label{matrixformlem} Let $P$ be a poset of size $n$ and let $Q$ be an antichain of size $j \in \{1,2\}$. Then $$M^{P \oplus Q} = M^P \otimes J_j +I_N \otimes M^Q,$$ where $N = | \mathcal{L}(P)|$.  
\end{lem}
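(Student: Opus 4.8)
The plan is to compute the action of the promotion operators $\partial_i$ on $\mathcal{L}(P \oplus Q)$ directly and read the two summands off the resulting transition matrix. First I would fix the bijection $\mathcal{L}(P \oplus Q) \cong \mathcal{L}(P) \times \mathcal{L}(Q)$: since every element of $Q$ lies above every element of $P$ in $P \oplus Q$, each linear extension of $P \oplus Q$ is uniquely a concatenation $\pi\sigma$, where $\pi = \pi_1 \cdots \pi_n \in \mathcal{L}(P)$ fills positions $1, \ldots, n$ and $\sigma = \sigma_1 \cdots \sigma_j \in \mathcal{L}(Q)$ fills positions $n+1, \ldots, n+j$. Ordering the extensions lexicographically groups them by their $P$-part with the $Q$-part varying fastest, which is exactly the index convention under which $A \otimes B$ acts by $A$ on the $P$-factor and $B$ on the $Q$-factor; this is what makes the answer come out as $M^P \otimes J_j + I_N \otimes M^Q$.

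The key step is to determine $\partial_i(\pi\sigma)$. Writing $m = n + j$ for the size of $P \oplus Q$, I would split the defining product into consecutive blocks of the same composition,
\[ \partial_i = \bigl(\tau_{m-1} \cdots \tau_{n+1}\bigr)\,\tau_n\,\bigl(\tau_{n-1} \cdots \tau_i\bigr). \]
For $i \le n$ the rightmost group only compares $P$-elements with $P$-elements, and comparabilities among $P$-elements agree in $P$ and in $P \oplus Q$, so it acts as the promotion operator $\partial_i^{P}$ on $\pi$ and fixes $\sigma$. The middle factor $\tau_n$ compares the $P$-element now in position $n$ with $\sigma_1 \in Q$; since $P$ sits entirely below $Q$ these are comparable, so $\tau_n$ does nothing. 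After shifting indices by $n$, the leftmost group is exactly $\partial_1^{Q}$ acting on the still-untouched block $\sigma$, and because $Q$ is an antichain every comparison there is a genuine swap, so $\partial_1^{Q}$ is the rotation $\sigma_1 \cdots \sigma_j \mapsto \sigma_2 \cdots \sigma_j \sigma_1$. Thus for $i \le n$ one gets $\partial_i(\pi\sigma) = (\partial_i^{P}\pi)(\partial_1^{Q}\sigma)$ with edge weight $x_{\pi_i}$, \emph{independently of which} $i \le n$ \emph{is used on the $Q$-block}; and for $i > n$ all active factors lie in the $Q$-block, giving $\partial_i(\pi\sigma) = \pi\,(\partial_{i-n}^{Q}\sigma)$ with edge weight $x_{\sigma_{i-n}}$.

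With the action in hand the matrix identity is bookkeeping: I would sum the weights $x_{w_i}$ over all $i$ sending $\pi\sigma$ to a fixed $\pi'\sigma'$. The contributions from $i \le n$ collect into $M^{P}_{\pi,\pi'}$ subject to the constraint $\sigma' = \partial_1^{Q}\sigma$, while those from $i > n$ collect into $M^{Q}_{\sigma,\sigma'}$ subject to $\pi' = \pi$. The constraint $\sigma' = \partial_1^{Q}\sigma$ is encoded by the permutation matrix of the rotation $\partial_1^{Q}$ on $\mathcal{L}(Q)$: for $j = 1$ it is $(1) = J_1$, and for $j = 2$ the rotation is a transposition whose matrix is $\left(\begin{smallmatrix} 0 & 1 \\ 1 & 0 \end{smallmatrix}\right) = J_2$. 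Hence the first family assembles into $M^{P} \otimes J_j$ and the second into $I_N \otimes M^{Q}$, which is the claim.

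The main obstacle is the middle paragraph: one must justify that a single promotion pass begun inside $P$ survives the crossing into $Q$ and then fully rotates the top antichain, regardless of the starting index $i \le n$. The hypothesis $j \in \{1,2\}$ is exactly what keeps this clean, since it is only here that the rotation $\partial_1^{Q}$ coincides with the reversal $J_j$; equivalently, only here is $|\mathcal{L}(Q)| = j! = j$, so that the two summands even have matching dimension $Nj$. For $j \ge 3$ the rotation is no longer an involution and differs from $J_j$, so the identity in this form is special to antichains of size at most two.
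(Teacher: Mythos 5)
Your proof is correct and takes essentially the same route as the paper's: both compute how each $\partial_i$ acts on a concatenation $\pi\sigma \in \mathcal{L}(P \oplus Q)$ (promotion of $\pi$ together with a forced rotation of the $Q$-block when $i \le n$, pure $M^Q$-action fixing $\pi$ when $i > n$) and then read the two Kronecker summands off the resulting edge structure. Your factorization $\partial_i = \bigl(\tau_{m-1}\cdots\tau_{n+1}\bigr)\,\tau_n\,\bigl(\tau_{n-1}\cdots\tau_i\bigr)$ simply makes explicit the verification that the paper compresses into ``one can readily see,'' so it is a more detailed write-up of the same argument.
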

\begin{proof}
Let first $Q= \xymatrix{\bullet a}$. Then $M^Q =\begin{pmatrix}x_a\end{pmatrix}$ and $\mathcal{L}(P \oplus Q) = \{\pi a \colon \pi \in \mathcal{L}(P)\}.$ One can readily see that $\pi a \xrightarrow{x_a} \pi a$ and $\pi a \xrightarrow{x_j} \pi'a$ in the promotion graph $G_{P \oplus Q}$ if and only if $\pi \xrightarrow{x_j} \pi'$ in $G_P$, $j=1,\ldots, n$. Therefore,
$$M^{P \oplus Q} = M^P + x_{a}I_{N} = M^P \otimes J_1 + I_N \otimes M^Q.$$
Let now $Q =\xymatrix{\bullet a & \bullet b}$. Then $M^Q = \begin{pmatrix} x_b & x_a \\ x_b & x_a \end{pmatrix}$ and $\mathcal{L}(P \oplus Q) = \{\pi ab, \pi ba \colon \pi \in \mathcal{L}(P)\}.$ The matrix $M^{P \oplus Q}$ is of size $2N$ with blocks
\begin{center}
\begin{tabular}{r|c c|   }
 \multicolumn{1}{r}{}& \multicolumn{1}{r}{$\pi ab $}  &  \multicolumn{1}{r}{$\pi ba $}\\
\cmidrule{2-3}
$\pi ab$ & $x_b$ & $x_a$ \\
$\pi ba$ & $x_b$  & $x_a$  \\
\cmidrule{2-3}
\end{tabular}
\end{center}
on the diagonal. Furthermore, for $j \neq a, b$, if $\pi \xrightarrow{x_j} \pi'$ in $G_P$, then in  $M^{P \oplus Q}$ we have
\begin{center}
\begin{tabular}{r|c c|   }
 \multicolumn{1}{r}{}& \multicolumn{1}{r}{$\pi' ab $}  &  \multicolumn{1}{r}{$\pi' ba $}\\
\cmidrule{2-3}
$\pi ab$ &0  & $x_j$ \\
$\pi ba$ & $x_j$  & 0 \\
\cmidrule{2-3}
\end{tabular}.
\end{center}
Thus, $M^{P \oplus Q} = M^P \otimes J_2 + I_N \otimes M^Q$.
\end{proof}

\begin{cor} \label{cordecomp} Let $P = Q_1 \oplus \cdots \oplus Q_k$ be a rank $k$ ladder and let 
\[B_i  = \begin{cases} \begin{pmatrix} x_{b_i} & x_{a_i} \\ x_{b_i} & x_{a_i}  \end{pmatrix}  & \text{ if } Q_i = \xymatrix{\bullet a_{i} & \bullet b_{i}} \\  (x_{a_i}) & \text{ if } Q_i = \xymatrix{\bullet a_{i}}. \end{cases}\] Then \[M^P= \displaystyle \sum_{t=1}^k I_{|Q_{1}|} \otimes \cdots  \otimes I_{|Q_{t-1}|} \otimes B_{t} \otimes J_{|Q_{t+1}|} \otimes \cdots  \otimes J_{|Q_{k}|}.\]
\end{cor}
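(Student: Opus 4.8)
The plan is to prove this by induction on the rank $k$, peeling off the top antichain $Q_k$ at each stage and invoking Lemma~\ref{matrixformlem} as the sole inductive step. For the base case $k=1$ we have $P=Q_1$, and the asserted identity reduces to $M^{Q_1}=B_1$, the lone term in the sum carrying no tensor factors on either side. This is exactly the definition of $B_1$: the computation inside the proof of Lemma~\ref{matrixformlem} gives $M^{Q}=\left(\begin{smallmatrix} x_b & x_a \\ x_b & x_a\end{smallmatrix}\right)$ when $|Q|=2$ and $M^{Q}=(x_a)$ when $|Q|=1$, which are precisely the two cases of $B_1$. So $B_i=M^{Q_i}$ throughout.

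For the inductive step, write $P=P'\oplus Q_k$ with $P'=Q_1\oplus\cdots\oplus Q_{k-1}$, which is unambiguous since ordinal sum is associative, and set $N'=|\mathcal{L}(P')|$. Lemma~\ref{matrixformlem} yields
\[M^{P}=M^{P'}\otimes J_{|Q_k|}+I_{N'}\otimes M^{Q_k}.\]
Applying the inductive hypothesis to $M^{P'}$ and using bilinearity of the Kronecker product to distribute $\otimes\, J_{|Q_k|}$ over the sum, the first summand becomes
\[M^{P'}\otimes J_{|Q_k|}=\sum_{t=1}^{k-1} I_{|Q_1|}\otimes\cdots\otimes I_{|Q_{t-1}|}\otimes B_t\otimes J_{|Q_{t+1}|}\otimes\cdots\otimes J_{|Q_{k-1}|}\otimes J_{|Q_k|},\]
which is exactly the collection of terms $t=1,\ldots,k-1$ of the target formula, the newly appended $J_{|Q_k|}$ simply extending each string of $J$'s out to index $k$. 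It then remains only to identify the second summand with the $t=k$ term.

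This last identification is the one place needing a small computation. I would first show $N'=\prod_{i=1}^{k-1}|Q_i|$, which follows because a linear extension of an ordinal sum is a linear extension of the bottom piece followed by one of the top, giving $|\mathcal{L}(R\oplus S)|=|\mathcal{L}(R)|\cdot|\mathcal{L}(S)|$, together with $|\mathcal{L}(Q_i)|=|Q_i|$ for an antichain of size $1$ or $2$. Combined with the standard identity $I_{ab}=I_a\otimes I_b$, this factors $I_{N'}=I_{|Q_1|}\otimes\cdots\otimes I_{|Q_{k-1}|}$, and since $M^{Q_k}=B_k$ we obtain
\[I_{N'}\otimes M^{Q_k}=I_{|Q_1|}\otimes\cdots\otimes I_{|Q_{k-1}|}\otimes B_k,\]
which is precisely the $t=k$ term (whose range of $J$-factors $t+1,\ldots,k$ is empty). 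Summing the two contributions reproduces the full formula and closes the induction.

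I do not expect a genuine obstacle here, since all the structural content is already contained in Lemma~\ref{matrixformlem}. The only ingredient beyond routine bookkeeping is the factorization $I_{N'}=I_{|Q_1|}\otimes\cdots\otimes I_{|Q_{k-1}|}$, resting on the product formula for the number of linear extensions of an ordinal sum; the remaining care is purely in indexing the tensor strings so that appending $J_{|Q_k|}$ and matching $B_k$ align correctly with the stated terms.
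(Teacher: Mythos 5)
Your proof is correct and takes essentially the same approach as the paper, whose entire proof is the remark that since $M^{Q_i}=B_i$, the claim follows by iteratively applying Lemma~\ref{matrixformlem}. You have merely made explicit the induction and the bookkeeping the paper leaves implicit, in particular the factorization $I_{N'}=I_{|Q_1|}\otimes\cdots\otimes I_{|Q_{k-1}|}$ coming from the product formula for linear extensions of an ordinal sum.
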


\begin{proof} Since $M^{Q_{i}} = B_{i}$, the claim follows by iteratively applying Lemma~\ref{matrixformlem}.
\end{proof}

To describe the  eigenvalues and eigenfunctions of $M^{P}$ for a ladder $P = Q_1 \oplus \cdots \oplus Q_k$, we consider the set of vectors $v$ and corresponding scalars $c^{v}$ that can be obtained as follows.

\begin{algorithm}[H]
 $c_0 = 0$\\
 \For{$i = 1$ \KwTo $k$}{
  \If{$|Q_i|=1$}{
    $v_i = (1)$\\
    $c_{i} = c_{i-1} + x_{a_{i}}$
  } 
  \If{$|Q_i|=2$}{
    $v_i = \begin{pmatrix}1 \\1 \end{pmatrix}$\\
$c_{i} = c_{i-1} +x_{a_{i}} + x_{b_{i}}$\\

or \\

$v_{i} = \begin{pmatrix*}[r]-x_{a_i} \\ x_{b_i} \end{pmatrix*} - c_{i-1} \begin{pmatrix*}[r] 1 \\ -1 \end{pmatrix*} $\\
$c_{i} = - c_{i-1}$
   }
}
$v = v_{1} \otimes \cdots \otimes v_{k}$\\
$c^v = c_k$
\caption{Algorithm for finding the eigenvalues and eigenfunctions of a ladder.}
\label{algo}
\end{algorithm}

For example, the vectors $v$ that can be generated this way for the ladder $P$ from Figure~\ref{ladder} are

\[ \begin{pmatrix}1 \\ 1 \end{pmatrix} \otimes \begin{pmatrix} 1 \\ 1 \end{pmatrix}, \begin{pmatrix*}[r]-x_1 \\ x_2 \end{pmatrix*} \otimes \begin{pmatrix*}[r] -x_3 \\  x_4 \end{pmatrix*}, \begin{pmatrix*}[r]-x_1 \\ x_2  \end{pmatrix*} \otimes \begin{pmatrix} 1 \\ 1 \end{pmatrix}, \begin{pmatrix}1 \\ 1 \end{pmatrix} \otimes \left(\begin{pmatrix*}[r] -x_3\\ x_4 \end{pmatrix*} -(x_1 + x_2) \begin{pmatrix*}[r] 1 \\ -1 \end{pmatrix*} \right)\]
and the corresponding scalars $c^{v}$ are
\[x_1 + x_2 + x_3 + x_4, 0, x_3+x_4, -(x_1+x_2).\]

%

\begin{thm}
If $P = Q_1 \oplus \cdots \oplus Q_k$ is a ladder, then $M^P$ is diagonalizable. In particular, the eigenvalues of $M^P$ are exactly the scalars $c^v$ that can be obtained using Algorithm~\ref{algo} with corresponding eigenfunctions $v$. 
\end{thm}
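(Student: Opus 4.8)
The plan is to induct on the rank $k$, peeling off the top antichain $Q_k$ and using the tensor decomposition of Lemma~\ref{matrixformlem}. Write $P_i = Q_1 \oplus \cdots \oplus Q_i$, so that $P_k = P$, and set $N = |\mathcal{L}(P_{i-1})|$; then
\[ M^{P_i} = M^{P_{i-1}} \otimes J_{|Q_i|} + I_N \otimes M^{Q_i}. \]
A vector produced by Algorithm~\ref{algo} through step $i$ has the form $v = v' \otimes v_i$, where $(v', c')$ with $c' = c_{i-1}$ is a pair produced through step $i-1$ and $(v_i, c_i)$ comes from $c'$ via one of the branches. First I would prove by induction that every such $v$ is an eigenfunction of $M^{P_i}$ with eigenvalue $c_i$. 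Applying the displayed decomposition together with the inductive hypothesis $M^{P_{i-1}} v' = c' v'$ gives
\[ M^{P_i}(v' \otimes v_i) = v' \otimes \bigl( c'\, J_{|Q_i|} v_i + M^{Q_i} v_i \bigr), \]
so the entire eigenvalue statement reduces to the single \emph{local identity} $c_{i-1}\, J_{|Q_i|} v_i + M^{Q_i} v_i = c_i\, v_i$.

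Next I would verify this local identity branch by branch. When $|Q_i| = 1$, and when $|Q_i| = 2$ with $v_i = (1,1)^{T}$, the vector $v_i$ is fixed by $J_{|Q_i|}$ and is an eigenvector of $M^{Q_i}$ (with eigenvalue $x_{a_i}$, respectively $x_{a_i} + x_{b_i}$), so the identity is immediate and reproduces the additive branch $c_i = c_{i-1} + x_{a_i}$ (respectively $+\,x_{b_i}$). The interesting case is the sign-flipping branch $c_i = -c_{i-1}$ with $v_i = (-x_{a_i}, x_{b_i})^{T} - c_{i-1}(1,-1)^{T}$: here $M^{Q_i} v_i$ is \emph{not} a multiple of $v_i$ but collapses onto the all-ones vector, namely $M^{Q_i} v_i = c_{i-1}(x_{a_i}-x_{b_i})(1,1)^{T}$, and one checks directly that adding $c_{i-1}\, J_2 v_i$ cancels this $(1,1)^{T}$ contribution and converts the result into $-c_{i-1} v_i$. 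I expect this cancellation to be the conceptual heart of the argument: it is precisely why the ladder eigenvalues can carry $-1$ coefficients, and why the sign flip in the scalar recursion is forced.

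It then remains to see that these eigenfunctions account for everything. The number of vectors produced by Algorithm~\ref{algo} is $2^{m}$, where $m$ is the number of rank-$2$ antichains among the $Q_i$, and this equals $\dim = |\mathcal{L}(P)| = \prod_i |Q_i| = 2^{m}$. Hence it suffices to prove that the produced vectors are linearly independent, as this simultaneously yields diagonalizability and shows the $c^v$ are exactly the eigenvalues with multiplicity. I would again induct: writing each level-$i$ second-branch vector as $v_i = f - c_{i-1} g$ with $f = (-x_{a_i}, x_{b_i})^{T}$ and $g = (1,-1)^{T}$, and expressing $f$ in the basis $\{(1,1)^{T}, g\}$ of the two-dimensional factor, a vanishing linear combination of the $2^m$ vectors decouples, via independence of the level-$(i-1)$ vectors $v'$ tensored with the independent pair $\{(1,1)^{T}, g\}$, into scalar conditions of the form $\beta_j\bigl(c'_j + \tfrac12(x_{a_i}+x_{b_i})\bigr) = 0$.

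The technical point, and the second obstacle, is that the level-$i$ vectors depend on the scalars $c_{i-1}$, which in turn depend on all earlier choices, so the eigenfunctions are \emph{not} a clean tensor product of fixed bases. This is resolved by the observation that $c_{i-1}$ is a $\{0,\pm1\}$-combination of the variables from levels $< i$ only, and so involves neither $x_{a_i}$ nor $x_{b_i}$; therefore $c'_j + \tfrac12(x_{a_i}+x_{b_i}) \neq 0$ (working over the field $\mathbb{Q}(x_1,\dots,x_n)$ of rational functions, equivalently for generic parameters), which forces every $\beta_j$, and then every remaining coefficient, to vanish. This produces a full basis of eigenfunctions and completes the proof.
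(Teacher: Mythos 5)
Your proposal is correct, and its core is the same as the paper's: the paper works from Corollary~\ref{cordecomp} (which is just Lemma~\ref{matrixformlem} iterated) and runs an induction over the partial sums of that tensor decomposition, so everything reduces to exactly your local identity $M^{Q_i}v_i + c_{i-1}J_{|Q_i|}v_i = c_i v_i$, verified branch by branch; your observation that in the sign-flip branch $M^{Q_i}v_i = c_{i-1}(x_{a_i}-x_{b_i})(1,1)^{T}$ collapses onto the all-ones vector and is cancelled by $c_{i-1}J_2 v_i$ is precisely what the paper's displayed case analysis amounts to. Where you genuinely go beyond the paper is the completeness step: the paper's proof ends once each vector produced by Algorithm~\ref{algo} is shown to be an eigenvector with eigenvalue $c^v$, and it never argues that the $2^m$ produced vectors ($m$ the number of rank-two levels, so $2^m = |\mathcal{L}(P)|$) are linearly independent --- which is what diagonalizability and the claim that the $c^v$ exhaust the spectrum with the correct multiplicities actually require. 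Your dimension count together with the decoupling argument (using that $c_{i-1}$ involves no level-$i$ variables, hence $c'_j + \tfrac12(x_{a_i}+x_{b_i}) \neq 0$) fills this gap cleanly. One caveat to retain: that nonvanishing holds over $\mathbb{Q}(x_1,\dots,x_n)$, i.e.\ for generic parameters, but can fail at special positive values (e.g.\ when $x_{a_i}+x_{b_i} = -2c'_j$, the sign-flip eigenvector degenerates to a multiple of $v'_j \otimes (1,1)^{T}$ and the two corresponding eigenvalues collide), so the independence statement --- and with it the theorem as you prove it, like the paper's own implicit argument --- should be read generically.
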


\begin{proof} Let $\tilde{v_{i}} = J_{|Q_{i}|}v_{i}$. In view of Corollary~\ref{cordecomp}, it's sufficient to prove  that for $0 \leq m \leq k-1$,
\[ \sum_{t=k-m}^k  I_{|Q_1|} \otimes \cdots \otimes I_{|Q_{k-t}|} \otimes B_{k-t+1} \otimes J_{|Q_{k-t+2}|} \otimes \cdots \otimes J_{|Q_k|}(v_1 \otimes \cdots \otimes v_k) =c_{m+1}v_1 \otimes \cdots \otimes v_{m+1} \otimes \widetilde{v}_{m+2} \otimes \cdots \otimes \widetilde{v}_{k}. \]

For $m=0$,
\begin{align*}
&\displaystyle \sum_{t=k}^k  I_{|Q_1|} \otimes \cdots \otimes I_{|Q_{k-t}|} \otimes B_{k-t+1} \otimes J_{|Q_{k-t+2}|} \otimes \cdots \otimes J_{|Q_k|}(v_1 \otimes \cdots \otimes v_k)    \\
& = B_1v_1 \otimes \widetilde{v}_2 \otimes \cdots \otimes \widetilde{v}_k \\
& = \begin{cases} (x_{a_1})v_1 \otimes \widetilde{v}_2 \otimes \cdots \otimes \widetilde{v}_k & \text{ if } v_1 = (1) \\ (x_{a_1} + x_{b_1})v_1 \otimes \widetilde{v}_2 \otimes \cdots \otimes \widetilde{v}_k & \text{ if } v_1 = \begin{pmatrix} 1 \\ 1 \end{pmatrix} \\  -c_0 v_1 \otimes \widetilde{v}_2 \otimes \cdots \otimes \widetilde{v}_k  & \text { if } v_1 = \begin{pmatrix*}[r] - x_{a_1} \\ x_{b_1} \end{pmatrix*} \end{cases}\\
& = c_1 v_1 \otimes \widetilde{v}_2 \otimes \cdots \otimes \widetilde{v}_k
\end{align*}
Using the induction hypothesis, we have
\begin{align*}
& \sum_{t=k-m}^k  I_{|Q_1|} \otimes \cdots \otimes I_{|Q_{k-t}|} \otimes B_{k-t+1} \otimes J_{|Q_{k-t+2}|} \otimes \cdots \otimes J_{|Q_k|}(v_1 \otimes \cdots \otimes v_k)    \\
& =  v_1 \otimes \cdots \otimes v_{m} \otimes B_{m+1}v_{m+1} \otimes \widetilde{v}_{m+2} \otimes \cdots \otimes \widetilde{v}_k   + c_{m} v_1 \otimes \cdots \otimes v_{m} \otimes  \widetilde{v}_{m+1} \otimes \widetilde{v}_{m+2} \otimes \cdots \otimes \widetilde{v}_k\\
&= v_1 \otimes \cdots \otimes v_m \otimes (B_{m+1} v_{m+1} +c_{m} \widetilde{v}_{m+1})\otimes \widetilde{v}_{m+2} \otimes \cdots \otimes \widetilde{v}_k\\
& = \begin{cases} v_1 \otimes \cdots \otimes (c_m + x_{a_{m+1}})v_{m+1} \otimes \cdots \otimes \widetilde{v}_k & \text{ if } v_{m+1} = (1) \\   v_1 \otimes \cdots \otimes (c_m + x_{a_{m+1}}+ x_{b_{m+1}})v_{m+1} \otimes \cdots \otimes \widetilde{v}_k & \text{ if } v_{m+1} = \begin{pmatrix} 1 \\ 1 \end{pmatrix}  \\ v_1 \otimes \cdots \otimes v_{m} \otimes (-c_m v_{m+1}) \otimes \widetilde{v}_{m+2} \otimes \cdots \otimes \widetilde{v}_k & \text { if } v_{m+1} = \begin{pmatrix*}[r] - x_{a_{m+1}} \\ x_{b_{m+1}} \end{pmatrix*} - c_{m}\begin{pmatrix*}[r] 1 \\ -1 \end{pmatrix*}\end{cases}\\
& =c_{m+1}v_1 \otimes \cdots \otimes v_{m+1} \otimes \widetilde{v}_{m+2} \otimes \cdots \otimes \widetilde{v}_{k}.\end{align*}

\end{proof}

\section{Proof of Theorem~\ref{forestladdereig}} \label{S:proofmain}

 
For a poset $P$, let $R_{P}$ be the set of all pairs $(a,b)$ for which $P$ can be written in the form  \[P = Q' \oplus a\oplus b \oplus Q''+P_{2}.\]  In this section we will assume that $R_{P} \neq \emptyset$ and for a pair $(a,b) \in R_{P}$, we will denote by $P'$ the poset $P \setminus \{(a, b)\}$, i.e., the poset whose Hasse diagram is obtained from the Hasse diagram of $P$ by deleting the edge that represents the covering relation $a \prec b$.  We will say that $M^{P}$ has the \emph{upset property} if its characteristic polynomial factors into linear terms and for each eigenvalue $x^{\mathfrak{s}} = \sum c_{k}^\mathfrak{s}x_{k}$ of $M^{P}$ and a pair $(a,b) \in R_{P}$, the following two conditions are true:
\begin{enumerate}[(a)]
\item $x_{a} \in x^{\mathfrak{s}} \implies x_{b} \in x^{\mathfrak{s}}$ and $c_{a}^\mathfrak{s} = c_{b}^\mathfrak{s}$
\item $x_{b} \in x^{\mathfrak{s}}, x_{a} \notin x^{\mathfrak{s}} \implies x_{k} \notin x^{\mathfrak{s}}$ for $k \prec_{P} a$.
\end{enumerate}

Here and throughout the paper, we will use $x_{k} \in x^{\mathfrak{s}}$ to denote that $x_{k}$ appears in $x^{\mathfrak{s}}$ with a nonzero coefficient.

Note that the matrix $M^{P}$ can be written as $M^{P} = \sum x_{i} G_{i}$, where $G_{i}$ are the matrices corresponding to the extended promotion operators $\partial_{i}$.
 
 \begin{thm}\label{generalfactor}
Let $P = Q' \oplus a\oplus b \oplus Q''+P_{2}$ and $P' = P\setminus \{(a, b)\}$. Suppose the matrices $G_{i}$ are simultaneously upper-triangularizable matrices.  If  $M^{P}$ has the upset property then so does $M^{P'}.$ In particular, for each eigenvalue $x^{\mathfrak{s}} = \sum c_{k}^\mathfrak{s}x_{k}$ of $M^P$, $M^{P'}$ has two eigenvalues given by 
\[ \begin{cases} x^{\mathfrak{s}}, \displaystyle \sum_{k \npreceq_{P} b} c_{k}^\mathfrak{s}x_{k} - \sum_{k \prec_{P} a} c_{k}^\mathfrak{s}x_{k} & \; \;  \text{ if } x_{a} , x_{b} \in x^{\mathfrak{s}} \text{ or } x_{a} , x_{b} \notin x^{\mathfrak{s}}\\
\displaystyle x^{\mathfrak{s}}, x^{\mathfrak{s}} - c_{b}^\mathfrak{s}x_{b} + c_{b}^\mathfrak{s}x_{a}  & \; \;  \text{ if } x_{a} \not\in x^{\mathfrak{s}}, x_{b} \in x^{\mathfrak{s}}. 
\end{cases}
\]
 \end{thm}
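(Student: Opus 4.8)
The plan is to exploit the two-to-one relationship between $\mathcal{L}(P')$ and $\mathcal{L}(P)$ produced by breaking the cover $a\prec b$. In $P$ the spine of the relevant component reads $Q'\prec a\prec b\prec Q''$, whereas in $P'$ it becomes the antichain step $Q'\oplus\{a,b\}\oplus Q''$; hence every linear extension of $P'$ has either $a$ before $b$ or $b$ before $a$, giving a splitting $\mathcal{L}(P')=\mathcal{A}\sqcup\mathcal{B}$. Each class is in bijection with $\mathcal{L}(P)$ — $\mathcal{A}$ directly, and $\mathcal{B}$ after the relabeling $a\leftrightarrow b$, which identifies it with $\mathcal{L}(\bar P)$ for $\bar P=Q'\oplus b\oplus a\oplus Q''+P_2$ — so $|\mathcal{L}(P')|=2\,|\mathcal{L}(P)|$, matching the two-eigenvalues-per-eigenvalue statement. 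Ordering the basis as $(\mathcal{A},\mathcal{B})$ I would write $M^{P'}=\left(\begin{smallmatrix}A & x_a U\\ x_b V & D\end{smallmatrix}\right)$ and first prove the block identities $A=M^P-x_aR_a$ and $D=M^{\bar P}-x_bR_b$, where $R_a,R_b$ are the $0/1$ matrices of the promotion operators for $a$ in $P$ and for $b$ in $\bar P$ (where $a$ is still blocked by $b$). The mechanism is that any move $\partial_i$ with $\pi_i\ne a$ acts identically in $P$ and $P'$ and preserves the $a/b$ order, while promoting $a$ is the unique move carrying $\mathcal{A}$ into $\mathcal{B}$ (it sends $a$ across $b$); this is precisely what produces the off-diagonal blocks $x_aU$ and $x_bV$.

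For each (generalized) eigenvector $w$ of $M^P$ with eigenvalue $x^{\mathfrak{s}}$ I would then look for a lift $\binom{w}{v}\in\mathbb{R}^{\mathcal{L}(P')}$. Substituting $A=M^P-x_aR_a$ and $M^Pw=x^{\mathfrak{s}}w$ collapses the top block equation to $Uv=R_aw$, which pins down $v$, and the bottom equation $x_bVw+Dv=x^{\mathfrak{s}}v$ must then be checked for consistency. Establishing this solvability yields the first of the two eigenvalues, $x^{\mathfrak{s}}$, for every eigenvalue of $M^P$. Since $M^P$ need not be diagonalizable, I would run this lift along the common flag guaranteed by the hypothesis that the $G_i$ are simultaneously upper-triangularizable, lifting the flag of $M^P$ to one of $M^{P'}$ so that multiplicities are tracked correctly.

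The second eigenvalue $\mu$ comes from the complementary direction inside the two-dimensional $M^{P'}$-invariant subspace attached to $w$; computing the induced $2\times2$ action from the block identities gives $x^{\mathfrak{s}}+\mu$ as its trace, and here the upset property of $M^P$ is exactly the input that makes the answer clean. When $x_a,x_b\in x^{\mathfrak{s}}$, condition (a) forces $c_a^{\mathfrak{s}}=c_b^{\mathfrak{s}}$, and together with the both-absent case this yields $\mu=\sum_{k\npreceq_P b}c_k^{\mathfrak{s}}x_k-\sum_{k\prec_P a}c_k^{\mathfrak{s}}x_k$, in which the $Q'$-coefficients have flipped sign and the $a,b$ terms drop out. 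When $x_b\in x^{\mathfrak{s}}$ but $x_a\notin x^{\mathfrak{s}}$, condition (b) guarantees $c_k^{\mathfrak{s}}=0$ for all $k\prec_P a$, which deletes the $Q'$ sum and leaves $\mu=x^{\mathfrak{s}}-c_b^{\mathfrak{s}}x_b+c_b^{\mathfrak{s}}x_a$. Condition (a) also rules out the remaining possibility $x_a\in x^{\mathfrak{s}},\,x_b\notin x^{\mathfrak{s}}$, so the three listed cases are exhaustive.

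Finally I would assemble the global statement. Simultaneous (real) upper-triangularizability of the $G_i$ makes $M^{P'}=\sum_i x_iG_i$ upper-triangular in a common basis; as the $G_i$ are $0/1$ functional matrices, their real eigenvalues are roots of unity and hence lie in $\{0,\pm1\}$, so the characteristic polynomial of $M^{P'}$ factors into linear terms whose coefficients are in $\{0,\pm1\}$. The $2N$ eigenvalues produced above, counted with the flag multiplicities, then account for the entire spectrum. To close the induction I would verify conditions (a) and (b) for $M^{P'}$ against every pair in $R_{P'}$: for pairs disjoint from $\{a,b\}$ the relevant moves are unchanged, so the conditions transfer from $M^P$, while for pairs interacting with $\{a,b\}$ one checks them directly against the explicit forms of $x^{\mathfrak{s}}$ and $\mu$. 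I expect the main obstacle to be the combinatorics of the off-diagonal blocks when $Q''$ or $P_2$ is nonempty: promoting $a$ does not merely swap it past $b$ but sends it past every subsequent incomparable element to the top of its component, so $U$ is \emph{not} a relabeling of $R_a$; proving the solvability of $Uv=R_aw$ and extracting $\mu$ in closed form, case by case, is where the real work lies.
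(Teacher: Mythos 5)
Your skeleton---doubling $\mathcal{L}(P)$ into $\mathcal{L}(P')=\{\pi,\hat{\pi}\}$, writing $M^{P'}$ in block form relative to this doubling, and running the argument along the common flag of the $G_i$---is the same as the paper's, but your key block identity is false, and it fails exactly where the content of the theorem lies. You claim that ``any move $\partial_i$ with $\pi_i\ne a$ acts identically in $P$ and $P'$ and preserves the $a/b$ order,'' so that promoting $a$ is the unique move carrying $\mathcal{A}$ into $\mathcal{B}$. This is wrong whenever $Q'\neq\emptyset$: if $k\prec_P a$, then promoting $k$ in $P'$ pushes $k$ up through the chain, and since $a$ and $b$ are now incomparable the promotion path swaps them; thus every $x_k$-weighted edge with $k\prec_P a$ carries $\mathcal{A}$ to $\mathcal{B}$ and $\mathcal{B}$ to $\mathcal{A}$ (this is case (1) of the paper's Lemma~\ref{form}; cases (3)--(4) further show that the edges leaving $\mathcal{B}$ that cross back into $\mathcal{A}$ carry weight $x_b$, while the $x_a$-weighted edges out of $\mathcal{B}$ stay inside $\mathcal{B}$). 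Consequently $M^{P'}$ is not of the form $\bigl(\begin{smallmatrix}A & x_aU\\ x_bV & D\end{smallmatrix}\bigr)$ with $A=M^P-x_aR_a$: the off-diagonal blocks contain all the variables $x_k$ with $k\prec_P a$, and the diagonal blocks omit them. This is not the repairable bookkeeping issue you flag at the end (the behavior of $Q''$ and $P_2$); it is precisely these crossing $Q'$-terms that produce the flipped signs $-\sum_{k\prec_P a}c_k^{\mathfrak{s}}x_k$ in the second eigenvalue, which your block form can never generate---under your structure the $Q'$-variables sit on the diagonal and would enter the trace of the reduced $2\times2$ action with coefficient $+2$, not $0$.

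There are two further gaps. First, even with corrected blocks, your extraction of eigenvalues rests on unproved solvability claims (``$Uv=R_aw$ pins down $v$,'' ``the bottom equation must then be checked for consistency''); that is where the real work would be, and the paper's route avoids it entirely: it interleaves the basis as pairs $(\pi,\hat{\pi})$, so that $M^{P'}=\partial_{a,b}M^{P}$ is an \emph{entrywise} $2\times2$ blow-up of $M^P$ (Corollary~\ref{formcor}), proves that this blow-up commutes with conjugation by constant matrices, $\partial_{a,b}(SMS^{-1})=(S\otimes I_2)(\partial_{a,b}M)(S\otimes I_2)^{-1}$ (Lemmas~\ref{diagramlemma} and~\ref{jordan}), and then reads the spectrum directly off the explicit $2\times2$ diagonal blocks $\partial_{a,b}x^{\mathfrak{s}}$ of the resulting block upper-triangular matrix, the two upset-property cases giving exactly the two displayed eigenvalue pairs with no eigenvector lifting. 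Second, your closing paragraph is circular: to account for multiplicities you triangularize ``$M^{P'}=\sum_i x_iG_i$ in a common basis,'' but the hypothesis only gives simultaneous triangularizability of the matrices $G_i$ attached to $P$; that the matrices $G_i'$ attached to $P'$ are again simultaneously triangularizable is part of the conclusion needed for the induction, and the paper obtains it from the explicit real similarity $S'(S\otimes I_2)$ constructed in the last step of its proof.
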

 
 \begin{rem}
 The assumption that the $G_{i}$'s are simultaneously upper-triangularizable is stronger than asking that the characteristic polynomial $M^{P}$ factors into linear terms. We don't know whether this stronger assumption is necessary but we need it in our proof.
 \end{rem}
 
Notice that each poset $F_{1} \oplus L_{1} + \cdots + F_{k} \oplus L_{k}$ for forests $F_{i}$ and ladders $L_{i}$, can be obtained starting from a forest in which the upper parts of the tree components are chains and breaking covering relations in the chains. Moreover, the transition matrix of a forest satisfies the assumptions of Theorem~\ref{generalfactor} because, as proved in~\cite{cmc}, the monoid generated by the matrices $G_{i}$ is $\mathcal{R}$-trivial and the eigenvalues of the transition matrix are supported on the upsets of the forest (Theorem~\ref{forest}).  Therefore, Theorem~\ref{forestladdereig} follows from Theorem~\ref{generalfactor}.

\begin{exa} \label{relaxex} Let $P$ be the leftmost poset in Figure~\ref{figex}. Note that $R_{P} = \{(4,5), (5,6)\}$. Let $P' = P \setminus \{(5,6)\}$ and $P'' = P \setminus \{(4,5)\}$, both illustrated in Figure~\ref{figex}. 

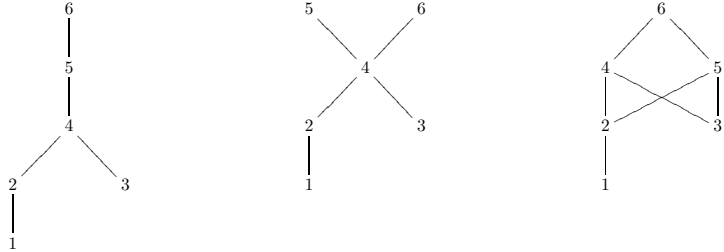
\begin{figure}[h]
\begin{center}
\scalebox{0.6}{$$\qquad \xymatrix{& 6\ar@{-}[d] & \\ & 5\ar@{-}[d] & \\ &4\ar@{-}[dl]\ar@{-}[dr] & \\ 2\ar@{-}[d] & & 3 \\ 1 & & } \qquad  \qquad \hspace{2cm} \xymatrix{ 5\ar@{-}[dr] & & 6\ar@{-}[dl] \\ & 4\ar@{-}[dl]\ar@{-}[dr] & \\2\ar@{-}[d] & & 3 \\ 1 & & } \qquad  \qquad \hspace{2cm}  \xymatrix{ & 6\ar@{-}[dl] \ar@{-}[dr]&\\  4\ar@{-}[d]\ar@{-}[drr] & & 5\ar@{-}[dll]\ar@{-}[d]\\2\ar@{-}[d] & & 3 \\ 1 & & } $$}
\end{center}
\caption{A forest $P$ and two posets  $P'$  and $P''$ obtained by breaking a covering relation in $P$.}
\label{figex}
\end{figure}
In Figure~\ref{derangement}, we give the lattice of upsets in $P$ and the corresponding derangement number for each upset. \\

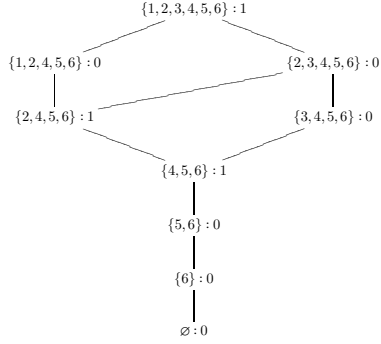
\begin{figure}[h]
\begin{center}
\scalebox{.5}{\xymatrix{
& \{1,2,3,4,5,6\}:1 \ar@{-}[dr]\ar@{-}[dl] &  \\ \{1,2,4,5,6\}:0\ar@{-}[d]&& \{2,3,4,5,6\}:0 \ar@{-}[d]\ar@{-}[dll]\\ \{2,4,5,6\}:1\ar@{-}[dr]&& \{3,4,5,6\}:0 \ar@{-}[dl]\\ &\{4,5,6\}:1 \ar@{-}[d]&\\ & \{5,6\}:0 \ar@{-}[d]& \\ &\{6\}:0\ar@{-}[d]& \\ &\emptyset: 0& }}
\end{center}
\caption{The lattice of upsets in $P$.}
\label{derangement}
\end{figure}
Thus, by Theorem~\ref{forest},  the eigenvalues of $M^{P}$ are 
\[x_4 + x_5 + x_6, x_2 + x_4  + x_5 + x_6, x_1 + x_2 + x_{3}+x_4 + x_5 + x_6\]
and, therefore, by Theorem~\ref{generalfactor}, the eigenvalues of $M^{P'}$ are
\[x_4 + x_5 + x_6, -x_4, x_2 + x_4 + x_5 + x_6, - (x_2 + x_4), x_1 + x_2 + x_3 + x_4 + x_5 +x_6, -(x_1  + x_2 + x_3 + x_4),\] 
while the eigenvalues of $M^{P''}$ are \[x_4 + x_5 + x_6,x_6 , x_2 + x_4 + x_5 + x_6, - x_2 +x_6, x_1 + x_2 + x_3 + x_4 + x_5 +x_6, -(x_1  + x_2 + x_3)+x_6.\] 
Notice that in the last eigenvalue of $M^{P'}$, $x_{1}$ appears with a negative coefficient, which contradicts property~\eqref{cthree} from Conjecture~\ref{conj}.
\end{exa}

\begin{exa}\label{relaxex2} Let $P = [4]+[1]$ be the union of a chain of size $4$ and a chain of size 1. Let $P' = P\backslash\{(2,3)\}$ (see Figure~\ref{figex2}). Note that $R_P = \{(1,2),(2,3),(3,4)\}$, $a = 2$ and $b=3$. \\

\begin{figure}[h]
\begin{center}
\scalebox{0.6}{$$\qquad \xymatrix{& 4\ar@{-}[d] & 5 \\ & 3\ar@{-}[d] & \\ &2\ar@{-}[d] & \\ & 1& } \qquad  \qquad \hspace{2cm}  \xymatrix{ & 4\ar@{-}[dl] \ar@{-}[dr]& & 5\\  2\ar@{-}[dr] & & 3\ar@{-}[dl] & \\  & 1& &  }$$}
\end{center}
\caption{A forest $P$ and the associated poset $P'$ obtained by breaking a covering relation.}
\label{figex2}
\end{figure}
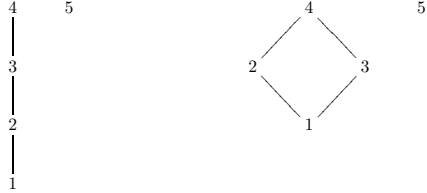
Since $P$ is a sum of chains, the multiplicities of the eigenvalues of $M^{P}$ can be computed more easily using Theorem~\ref{chain}. The eigenvalues of $M^P$ are 
\[0,x_4, x_3+x_4, x_2+x_3+x_4, x_1+x_2 + x_3 + x_4 + x_5. \]
Thus, by Theorem~\ref{generalfactor}, the eigenvalues of $M^{P'}$ are 
\[0,0,x_4,x_4,x_3+x_4, x_2+x_4, x_2+x_3+x_4, x_4, x_1+x_2+x_3+x_4+x_5, x_4+x_5 - x_1.\]

\end{exa}
The rest of this section is devoted to the proof of Theorem~\ref{generalfactor} which is based on several lemmas that we prove first. For the posets $P$ and $P'$ described at the beginning of this section, and $\pi \in \mathcal{L}(P)$, let $\hat{\pi} \in \mathcal{L}(P')$ be the linear extension of $P'$ obtained by interchanging $a$ and $b$. Then \[ \mathcal{L}(P') = \{ \pi, \hat{\pi} \colon \pi \in \mathcal{L}(P) \}.\] Recall that $G_{P}$ is the promotion graph of the poset $P$. The graphs $G_{P}$ and $G_{P'}$ are closely related as described in the following lemma.

\begin{lem}\label{form}
Let $P = Q' \oplus a\oplus b \oplus Q''+P_{2}$ and let $P' = P\setminus \{(a, b)\}$.

\begin{enumerate}[(1)]
\item \label{one} If $k \prec_{P} a$ and $\pi \overset{x_k}{\rightarrow} \tilde{\pi}$ in $G_{P}$, then $\pi\overset{x_k}{\rightarrow} \hat{\tilde{\pi}}$ and $\hat{\pi}\overset{x_k}{\rightarrow}\tilde{\pi}$ in $G_{P'}$.
\item \label{two} If $k \npreceq_{P} a, b$ and $\pi \overset{x_k}{\rightarrow} \tilde{\pi}$ in $G_{P}$, then $\pi\overset{x_k}{\rightarrow}\tilde{\pi}$ and $\hat{\pi}\overset{x_k}{\rightarrow} \hat{\tilde{\pi}}$ in $G_{P'}$.
\item \label{three} If $\pi \overset{x_a}{\rightarrow} \tilde{\pi}$ in $G_{P}$, then $\pi\overset{x_{a}}{\rightarrow}\hat{\tilde{\pi}}$ and $\hat{\pi}\overset{x_{b}}{\rightarrow}\tilde{\pi}$ in $G_{P'}$.
\item \label{four} If $\pi \overset{x_b}{\rightarrow} \tilde{\pi}$ in $G_{P}$, then $\pi\overset{x_b}{\rightarrow}\tilde{\pi}$ and $\hat{\pi}\overset{x_a}{\rightarrow}\hat{\tilde{\pi}}$ in $G_{P'}$.
\end{enumerate}
\end{lem}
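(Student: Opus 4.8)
The plan is to exploit the fact that $P$ and $P'$ have exactly the same comparabilities except for the single pair $\{a,b\}$: in $P$ we have $a\prec b$, while in $P'$ the elements $a$ and $b$ are incomparable, and all other relations (in particular those of $Q'$, $Q''$ and $P_2$ with $a$ and $b$) are unchanged. Since each $\tau_j$ swaps the entries in positions $j,j+1$ according only to whether they are comparable, the operator $\partial_i=\tau_{n-1}\cdots\tau_i$ acts identically in $G_P$ and $G_{P'}$ except at a step in which it compares $a$ and $b$ while they occupy adjacent positions. First I would record the shape of a linear extension of $P$: in $\pi\in\mathcal L(P)$ the entries preceding $a$ lie in $Q'\cup P_2$, those strictly between $a$ and $b$ lie in $P_2$, and those following $b$ lie in $Q''\cup P_2$, with $a$ before $b$. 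Thus the window around $a,b$ has the form $a\,m_1\cdots m_p\,b\,m_{p+1}\cdots m_q\,c_1\cdots$, where $m_1,\dots,m_q\in P_2$ and $c_1$ is the first entry of $Q''$ after $b$.

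Next I would make precise the ``bubbling'' description of $\partial_i$: reading left to right from position $i$, an entry travels right across every entry incomparable to it and halts just before the first entry larger than it; at that point the blocking entry becomes the new \emph{traveling element}, and so on. The successive traveling elements therefore form a strictly increasing chain in $P$. This immediately settles the cases in which no $a$--$b$ comparison can occur. If $k\in Q''$ (part of~\eqref{two}) or $k=b$ (case~\eqref{four}), then $i$ lies strictly to the right of $a$, so the promoted entry and all later travelers stay at positions $\ge i$; hence $a$ is never touched and $a,b$ are never compared, and the $\pi$--edge is the same in $G_P$ and $G_{P'}$. If $k\in P_2$ (the rest of~\eqref{two}), then, because the only entries comparable to a $P_2$-entry are again in $P_2$, every traveling element is forced to lie in $P_2$; such travelers pass $a$ and $b$ by incomparability and never bring them into a compared adjacency. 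In each of these situations the same pass applied to $\hat\pi$ merely carries the interchanged pair $a,b$ passively, producing $\hat{\tilde\pi}$.

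The substance is in cases~\eqref{one} and~\eqref{three}, where the comparison is triggered. For $k\prec_P a$ (so $k\in Q'$) and for $k=a$, I would first argue that the increasing chain of travelers necessarily reaches the window and makes $a$ the traveling element: every element of $Q'$ is $\prec a$, so a $Q'$-traveler can only halt at another $Q'$-element or at $a$ itself, and finiteness forces it eventually to halt at $a$ (for $k=a$ this is immediate). A short local computation in the window then shows the dichotomy: in $P$ the traveler $a$ halts just before $b$ and $b$ travels on to $c_1$, giving window $m_1\cdots m_p\,a\,m_{p+1}\cdots m_q\,b\,c_1$, whereas in $P'$ the traveler $a$ swaps past $b$ and itself travels on to $c_1$, giving $m_1\cdots m_p\,b\,m_{p+1}\cdots m_q\,a\,c_1$; these differ exactly by interchanging $a$ and $b$, so $\partial_i^{P'}\pi=\hat{\tilde\pi}$. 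Running the identical argument on $\hat\pi$, where now $b$ is reached first and plays the role of the traveler, yields $\partial_i^{P'}\hat\pi=\tilde\pi$; since in $\hat\pi$ the promoted position holds $b$, this is the weight-$x_b$ edge asserted in~\eqref{three}. The analogous window computation (with no relabeling of the promoted entry, since $k\ne a,b$) proves~\eqref{one}.

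The main obstacle is not any single case but making the traveling-element description of $\partial_i$ rigorous and keeping the positional bookkeeping under control: one must verify that the wave of travelers really does reach $a$ (resp.\ $b$) through the $P_2$-entries interleaved in the window, that exactly one $a$--$b$ comparison occurs and none afterward (the frontier only moves right), and that the left-shifts of the passed entries reassemble into precisely $\tilde\pi$ or $\hat{\tilde\pi}$. Once the window computation behind~\eqref{one} and~\eqref{three} is set up cleanly, each of the four claims reduces to a direct check.
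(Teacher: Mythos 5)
Your proposal is correct and takes essentially the same route as the paper's proof: both rest on the observation that $a$ and $b$ have identical comparabilities with every other element of $P$, decompose a linear extension into blocks around $a$ and $b$, and track the promotion pass (the ``traveling element'') through those blocks, with the only divergence between $G_{P}$ and $G_{P'}$ occurring at the single $a$--$b$ comparison, which produces exactly the $a \leftrightarrow b$ interchange. The differences are purely organizational --- the paper splits case (2) by the position of $k$ in $\pi$ rather than by component membership, and uses the relabeling identity that $\partial_{1}aC$ equals $\partial_{1}bC$ with $b$ replaced by $a$ where you instead extend the window to the first $Q''$-entry --- so this is the same argument with slightly different bookkeeping.
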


\begin{proof} Notice that the structure of $P$ and $P'$ implies that for $x \neq a,b$, $x \prec a$ (respectively, $a \prec x$) if and only if $x \prec b$ (respectively, $b \prec x$). Let $m = \partial_{\pi^{-1}(k)}$. We split the analysis into four cases.

\eqref{one} If $k \prec a$, then $\pi \in \mathcal{L}(P)$ is of the form $\pi = A_{1}kA_{2}aBbC$. Because of the structure of $P'$,  we have that for every $x$ in $A_{2}$, $k \prec x$ implies $x \prec a$. Therefore, $\tilde{\pi} = \partial_{m} \pi = A_{1} (\partial_{1}kA_{2}) B a (\partial_{1} b C)$. In $\mathcal{L}(P')$, however, since $a$ and $b$ are incomparable, $\partial_{m} \pi = A_{1} (\partial_{1}kA_{2}) B b (\partial_{1} a C) = \hat{\tilde{\pi}}$. The last equality is true because  $\partial_{1} a C$ can be obtained from $\partial_{1} b C$ by replacing $b$ with $a$. Also, \[ \partial_{m} \hat{\pi} = \partial_{m} A_{1}kA_{2}bBaC = A_{1} (\partial_{1}kA_{2}) B a (\partial_{1} b C) = \tilde{\pi}.\]

\eqref{two} If $k \npreceq a, b$, there are three possible subcases.

$(2a)$. If $\pi = AaBbC_{1}kC_{2}$, then $\tilde{\pi} = \partial_{m} \pi = AaBbC_{1}(\partial_{1}kC_{2})$. But then, clearly, in $\mathcal{L}(P')$, $\partial_{m} \pi =\tilde{\pi}$ as well. Also, $\partial_{m} \hat{\pi} = \partial_{m} AbBaC_{1}kC_{2} = AbBaC_{1}(\partial_{1}kC_{2}) = \hat{\tilde{\pi}}$.

$(2b)$. If $\pi = AaB_{1}kB_{2}bC$, the analysis is similar to the previous case. Namely, $\tilde{\pi} = \partial_{m} \pi = AaB_{1} (\partial_{1} kB_{2}bC)$. So, in $\mathcal{L}(P')$, $\partial_{m} \pi = \tilde{\pi}$ as well.  Also, $\partial_{m} \hat{\pi} = \partial_{m} AbB_{1}kB_{2}aC = AbB_{1}(\partial_{1}kB_{2}aC) = \hat{\tilde{\pi}}$. 

$(2c)$. If $\pi = A_{1}kA_{2}aBbC$, then notice that $\partial_{1}kA_{2}$ ends with an element $c$ which is also incomparable with $a$. Therefore, $c$ will swap with $a$ and $a$ will precede $b$ in $\tilde{\pi}$. Hence, in $\mathcal{L}(P')$,  $\partial_{m} \pi =\tilde{\pi}$ as well. Now it's not hard to see that $\partial_{m} \hat{\pi} = \partial_{m} A_{1}kA_{2}bBaC = \hat{\tilde{\pi}}$.

\eqref{three} Let $\pi = AaBbC$. Then the elements in $B$ are incomparable to both $a$ and $b$ and therefore, for $m = \partial_{\pi^{-1}(a)}$,  $\tilde{\pi} = \partial_{m} \pi = ABa(\partial_{1}bC)$. However, in $\mathcal{L}(P')$,  $a$ and $b$ can swap, so $\partial_{m} \pi = ABb(\partial_{1}aC) = \hat{\tilde{\pi}}$. Also, \[\partial_{m} \hat{\pi} = \partial_{m} AbBaC = ABa(\partial_{1}bC) = \tilde{\pi}.\]

\eqref{four} In this case for $\pi = AaBbC \in \mathcal{L}(P)$, $\tilde{\pi} = \partial_{m} \pi = AaB(\partial_{1}bC)$. So, in $\mathcal{L}(P')$, $ \partial_{m} \pi = \tilde{\pi}$ as well and $\partial_{m} \hat{\pi} = \partial_{m}AbBaC = AbB(\partial_{1}aC) = \hat{\tilde{\pi}}$.
\end{proof}

Let $P$ be a poset of size $n$ of the form  $P= Q' \oplus a\oplus b \oplus Q''+P_{2}$. For the transition matrix $M^{P}$ of size $m$,  we will denote by $\partial_{a,b} M^{P}$ the $2m \times 2m$ matrix obtained by replacing each entry of $M^{P}$ by a $2 \times 2$ block using the linear extension of the map:

\begin{enumerate}
\item[]\begin{tabular}{r|c| c  c  r|c c|  c }
 \multicolumn{1}{r}{}& \multicolumn{1}{r}{$\tilde{\pi}$}  &  \multicolumn{1}{r}{} &  \multicolumn{1}{r}{}& \multicolumn{1}{r}{}
 &  \multicolumn{1}{c}{$\tilde{\pi}$}
 & \multicolumn{1}{c}{$\hat{\tilde{\pi}}$} &\\
\cmidrule{2-2}
\cmidrule{6-7}
$\pi$ & $x_{k}$ & & $\longmapsto$&   $\pi$ &  &$x_{k}$  & $  $  for $k \prec a$\\ 
\cmidrule{2-2}
 \multicolumn{1}{r}{} &  \multicolumn{1}{r}{} & \multicolumn{1}{r}{}&  \multicolumn{1}{r}{} &
 $\hat{\pi}$ &   $x_{k}$ && \\
\cmidrule{6-7}
\end{tabular}
\hspace{0.3cm}
\begin{tabular}{r|c| c  c  r|c c|  c }
 \multicolumn{1}{r}{}& \multicolumn{1}{r}{$\tilde{\pi}$}  &  \multicolumn{1}{r}{} &  \multicolumn{1}{r}{}& \multicolumn{1}{r}{}
 &  \multicolumn{1}{c}{$\tilde{\pi}$}
 & \multicolumn{1}{c}{$\hat{\tilde{\pi}}$} &\\
\cmidrule{2-2}
\cmidrule{6-7}
$\pi$ & $x_{a}$ & & $\longmapsto$&   $\pi$ &  &$x_{a}$  & \\ 
\cmidrule{2-2}
 \multicolumn{1}{r}{} &  \multicolumn{1}{r}{} & \multicolumn{1}{r}{}&  \multicolumn{1}{r}{} &
 $\hat{\pi}$ &   $x_{b}$ && \\
\cmidrule{6-7}
\end{tabular}

\item[]\begin{tabular}{r|c| c c   r|c c|  c }
 \multicolumn{1}{r}{}& \multicolumn{1}{r}{$\tilde{\pi}$}  &  \multicolumn{1}{r}{} &  \multicolumn{1}{r}{}& \multicolumn{1}{r}{}
 &  \multicolumn{1}{c}{$\tilde{\pi}$}
 & \multicolumn{1}{c}{$\hat{\tilde{\pi}}$} &\\
\cmidrule{2-2}
\cmidrule{6-7}
$\pi$ & $x_{k}$ & & $\longmapsto$&   $\pi$ & $x_{k}$ & & $  $  for $k \npreceq b$ \\ 
\cmidrule{2-2}
 \multicolumn{1}{r}{} &  \multicolumn{1}{r}{} & \multicolumn{1}{r}{}&  \multicolumn{1}{r}{} &
$\hat{\pi}$ &  & $x_{k}$ & \\
\cmidrule{6-7}
\end{tabular}
\hspace{0.3cm}
\begin{tabular}{r|c| c c   r|c c|  c }
 \multicolumn{1}{r}{}& \multicolumn{1}{r}{$\tilde{\pi}$}  &  \multicolumn{1}{r}{} &  \multicolumn{1}{r}{}& \multicolumn{1}{r}{}
 &  \multicolumn{1}{c}{$\tilde{\pi}$}
 & \multicolumn{1}{c}{$\hat{\tilde{\pi}}$} &\\
\cmidrule{2-2}
\cmidrule{6-7}
$\pi$ & $x_{b}$ & & $\longmapsto$&   $\pi$ & $x_{b}$ & & \\ 
\cmidrule{2-2}
 \multicolumn{1}{r}{} &  \multicolumn{1}{r}{} & \multicolumn{1}{r}{}&  \multicolumn{1}{r}{} &
$\hat{\pi}$ &  & $x_{a}$ & \\
\cmidrule{6-7}
\end{tabular}
\end{enumerate}

\noindent In particular, a zero entry goes to a $2 \times 2$ block of zeros.

\begin{cor} \label{formcor} Let $P = Q' \oplus a\oplus b \oplus Q''+P_{2}$ and let $P' = P\setminus \{(a, b)\}$. Then $M^{P'} = \partial_{a,b} M^{P}$ in an appropriate basis of $\mathcal{L}(P')$.
\end{cor}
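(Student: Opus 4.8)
The plan is to show that $\partial_{a,b}M^{P}$ and $M^{P'}$ are literally the same matrix once $\mathcal{L}(P')$ is listed in the right order, by matching each entry of $\partial_{a,b}M^{P}$ against the edges of the promotion graph $G_{P'}$ that are furnished by Lemma~\ref{form}. First I would fix the basis: since $\mathcal{L}(P') = \{\pi, \hat{\pi} \colon \pi \in \mathcal{L}(P)\}$, I list the elements of $\mathcal{L}(P')$ as consecutive pairs $\pi, \hat{\pi}$, following whatever order was used to index $M^{P}$; this is the ``appropriate basis.'' With this ordering, $\partial_{a,b}M^{P}$ is naturally partitioned into $2 \times 2$ blocks indexed by pairs $(\pi, \sigma)$ with $\pi, \sigma \in \mathcal{L}(P)$, and the block in position $(\pi, \sigma)$ records the transitions among $\{\pi, \hat{\pi}\}$ and $\{\sigma, \hat{\sigma}\}$.

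Next I would transcribe, one edge at a time. The entry $M^{P}_{\pi, \tilde{\pi}}$ is the sum of $x_{k}$ over all $k$ with $\pi \overset{x_k}{\rightarrow} \tilde{\pi}$ in $G_{P}$, so by the ``linear extension of the map'' it suffices to treat a single edge $\pi \overset{x_k}{\rightarrow} \tilde{\pi}$ and sum. The four block-types in the definition of $\partial_{a,b}$ are indexed exactly by the partition of $[n]$ into $\{k \prec a\}$, $\{k=a\}$, $\{k=b\}$, and $\{k \npreceq b\}$, and this is precisely the partition driving the four cases of Lemma~\ref{form}: because $a \prec b$ forces $k \prec a \iff k \prec b$ for $k \neq a,b$, the condition $k \npreceq a,b$ coincides with $k \npreceq b$, and neither $a$ nor $b$ lies in that set. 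A direct comparison then shows that the block prescribed for $\pi \overset{x_k}{\rightarrow} \tilde{\pi}$ is exactly the one placing the two edges out of $\pi$ and out of $\hat{\pi}$ that Lemma~\ref{form}\eqref{one}--\eqref{four} produce in $G_{P'}$, with the correct targets and weights, including the $x_{a} \leftrightarrow x_{b}$ swap on the hatted row in the cases $k=a$ and $k=b$.

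Finally I would verify completeness, namely that every directed edge of $G_{P'}$ (counted with its label, loops included) is obtained exactly once this way, so that no entry of $M^{P'}$ is missed or overcounted. The ``from $\pi$'' halves of the blocks keep the original label and contribute exactly one out-edge of each unhatted state per label, while the ``from $\hat{\pi}$'' halves contribute one out-edge of each hatted state per label, the labels being permuted only by the involution $a \leftrightarrow b$. Since promotion is deterministic, each of the $2|\mathcal{L}(P)|$ states of $G_{P'}$ has exactly one out-edge per label, giving $2|\mathcal{L}(P)|\cdot n$ edge-slots; the lifting of the $|\mathcal{L}(P)|\cdot n$ edges of $G_{P}$, each to two edges, produces the same count, and the source/label bookkeeping above is injective, so it is a bijection.

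The hard part will be exactly this last bookkeeping: confirming that the four cases of Lemma~\ref{form} genuinely exhaust all edges of $G_{P'}$ and that the hatted-side relabeling does not lift two distinct edges of $G_{P}$ onto the same edge of $G_{P'}$. Everything else is a mechanical translation of Lemma~\ref{form} into the block notation defining $\partial_{a,b}$, so once the case partition is pinned down the identity $M^{P'} = \partial_{a,b}M^{P}$ is immediate.
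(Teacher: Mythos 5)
Your proposal is correct and takes essentially the same approach as the paper: the paper gives Corollary~\ref{formcor} without a separate proof precisely because the definition of $\partial_{a,b}$ is a direct transcription of the four cases of Lemma~\ref{form}, which is exactly the entry-by-entry matching you carry out (including the observation that $\{k \prec a\}$, $\{a\}$, $\{b\}$, $\{k \npreceq b\}$ partition $[n]$ and that $k \npreceq b$ coincides with $k \npreceq a,b$). The completeness bookkeeping you single out as the hard part is in fact immediate: each state of $G_{P'}$ has exactly one out-edge per label $x_k$, and your lifting produces exactly one edge with each label out of $\pi$ and out of $\hat{\pi}$, so the correspondence is forced to be a bijection.
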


\begin{exa}\label{relaxmat}
Let $P$ and $P'$ be as in Example~\ref{relaxex}. Then \begin{align*}\mathcal{L}(P) &= \{123456, 132456, 312456\} \\ \mathcal{L}(P') &= \{123456, 123465, 132456, 132465, 312456, 312465\},\end{align*}

\small{\[M^{P} = \begin{pmatrix} x_3 + x_4 + x_5 + x_6 & x_1  + x_2 & 0 \\ x_3 & x_2 + x_4 + x_5 + x_6 & x_1 \\ x_3 & 0 & x_1 + x_2 + x_4 + x_5 + x_6 \end{pmatrix} \]}
and 
{\tiny{\begin{align*}
M^{P'}& = \begin{pmatrix} x_6 & x_3 + x_4  + x_5 & 0 & x_1 + x_2 & 0 & 0 \\ x_3 + x_4 + x_6 & x_5 & x_1 + x_2 & 0 & 0 & 0 \\ 0 & x_3 & x_6 & x_2 + x_4 + x_5 & 0 & x_1 \\ x_3 & 0 & x_2 + x_4 + x_6 & x_5 & x_1 & 0 \\ 0 & x_3 & 0 & 0 & x_6 & x_1 + x_2 + x_4 + x_5 \\ x_3 & 0 & 0 & 0 & x_1 + x_2 + x_4 + x_6 & x_5 \end{pmatrix} .
\end{align*}}}
\end{exa}

For a complex matrix $S$, denote by $\partial S = S \otimes I_{2}$. So, if  $E$ is an elementary matrix of size $k$ corresponding to a row operation $R$ then $\partial E$ corresponds to performing a corresponding operation to 2 rows on a matrix of size $2k$. 

\begin{lem}\label{diagramlemma}
Let $S$ be a matrix with complex entries and $M$ a matrix whose entries are homogeneous degree 1 polynomials in $x_{1}, \ldots, x_{n}$.  Then \[(\partial S) (\partial_{a,b} M) =\partial_{a,b} (SM) \; \; \text{     and     } \; \; (\partial_{a,b} M) (\partial S) =\partial_{a,b} (MS) .\]
\end{lem}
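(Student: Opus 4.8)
The plan is to reinterpret $\partial_{a,b}$ as a single block-substitution operation and then reduce both identities to the $\mathbb{C}$-linearity of that substitution. First I would fix the bookkeeping: index the rows and columns of the $2m \times 2m$ matrices by pairs $(\pi,s)$ with $\pi \in \mathcal{L}(P)$ and $s \in \{1,2\}$, where $(\pi,1)$ and $(\pi,2)$ correspond to $\pi$ and $\hat{\pi}$ and are kept consecutive. Under this convention $\partial S = S \otimes I_2$ has $(\pi,\rho)$ block equal to the scalar matrix $S_{\pi\rho} I_2$, while $\partial_{a,b} M$ is the block matrix whose $(\pi,\tilde{\pi})$ block is obtained from the entry $M_{\pi\tilde{\pi}}$ by the rule in the four tables.

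The key observation is that those four tables define a single fixed $\mathbb{C}$-linear map $\Phi$ from homogeneous degree-$1$ polynomials in $x_1,\ldots,x_n$ to $2\times 2$ matrices with such entries, namely $\Phi(x_k) = x_k J_2$ when $k \prec_{P} a$, $\Phi(x_k) = x_k I_2$ when $k \npreceq_{P} b$, together with $\Phi(x_a) = \left(\begin{smallmatrix} 0 & x_a \\ x_b & 0 \end{smallmatrix}\right)$ and $\Phi(x_b) = \left(\begin{smallmatrix} x_b & 0 \\ 0 & x_a \end{smallmatrix}\right)$, extended by linearity. I would first check that these four cases partition the ground set, so that $\Phi$ is well defined on every entry: since $a \prec_{P} b$, the elements $\preceq_{P} b$ are exactly $Q' \cup \{a,b\}$, whence the conditions $k \prec_{P} a$, $k = a$, $k = b$, and $k \npreceq_{P} b$ are mutually exclusive and exhaustive. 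With this in hand, the $(\pi,\tilde{\pi})$ block of $\partial_{a,b} M$ is precisely $\Phi(M_{\pi\tilde{\pi}})$, and this description makes sense for an arbitrary $M$ with homogeneous degree-$1$ entries.

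Both identities then follow by a one-line block computation. For the first, the $(\pi,\tilde{\pi})$ block of $(\partial S)(\partial_{a,b} M)$ is $\sum_{\rho} (S_{\pi\rho} I_2)\,\Phi(M_{\rho\tilde{\pi}}) = \sum_{\rho} S_{\pi\rho}\,\Phi(M_{\rho\tilde{\pi}})$, whereas the $(\pi,\tilde{\pi})$ block of $\partial_{a,b}(SM)$ is $\Phi\bigl((SM)_{\pi\tilde{\pi}}\bigr) = \Phi\bigl(\sum_{\rho} S_{\pi\rho} M_{\rho\tilde{\pi}}\bigr)$; these agree because each scalar $S_{\pi\rho}$ can be pulled through the $\mathbb{C}$-linear map $\Phi$. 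The right-multiplication identity $(\partial_{a,b} M)(\partial S) = \partial_{a,b}(MS)$ is identical with $S$ acting on the second index, giving $\sum_{\rho} S_{\rho\tilde{\pi}}\,\Phi(M_{\pi\rho})$ on both sides. One should also note that $SM$ and $MS$ again have homogeneous degree-$1$ entries, so $\partial_{a,b}$, and hence $\Phi$, genuinely apply to them.

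The only real subtlety, and the place where the hypotheses enter, is that $\Phi$ is merely $\mathbb{C}$-linear and not linear over the polynomial ring: for instance $\Phi(x_a)$ is not $x_a$ times a constant matrix. This is exactly why $S$ must have complex (constant) entries, so that each $S_{\pi\rho}$ may be moved in and out of $\Phi$; the same computation would break down if $S$ had polynomial entries. Beyond this point the argument is purely formal block-matrix bookkeeping, so I expect the main effort to be setting up the block indices cleanly rather than surmounting any genuine mathematical obstacle.
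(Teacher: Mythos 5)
Your proof is correct and takes essentially the same approach as the paper: the paper's proof restates $\partial_{a,b}M$ as a four-term sum of Kronecker products (one term for each of the cases $k \prec a$, $k \npreceq b$, $k=a$, $k=b$), which is exactly your entrywise $\mathbb{C}$-linear substitution $\Phi$ in different packaging, and then concludes by observing that multiplication by the constant matrix $S$ commutes with each term. The only cosmetic difference is that you make the $\mathbb{C}$-linearity (as opposed to linearity over the polynomial ring) explicit as the crux, which the paper leaves implicit in its ``etc.''
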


\begin{proof}  Notice that the definition of $\partial_{a,b} M$ can be restated as
\[ \partial_{a,b} M = M\bigg \rvert_{\substack{x_{k}=0 \\ k \not \prec a} } \otimes \begin{pmatrix} 0 & 1 \\ 1 & 0\end{pmatrix} + M\bigg \rvert_{\substack{x_{k}=0 \\ k \preceq b} } \otimes I_{2} + \frac{1}{x_{a}} M\bigg \rvert_{\substack{x_{k}=0 \\ k \neq a} } \otimes \begin{pmatrix} 0 & x_{a} \\ x_{b} & 0 \end{pmatrix} + \frac{1}{x_{b}} M \bigg \rvert_{\substack{x_{k}=0 \\ k \neq b} } \otimes   \begin{pmatrix} x_{b} & 0 \\ 0 & x_{a}\end{pmatrix}.\] So, the claim follows since for a complex matrix $S$ independent of the $x_{i}$'s, \[SM\bigg \rvert_{\substack{x_{k}=0 \\ k \not \prec a} } = (SM) \bigg \rvert_{\substack{x_{k}=0 \\ k \not \prec a} },\] etc. 
\end{proof}

\begin{lem} \label{jordan} Let $M$ be a matrix whose entries are homogeneous degree 1 polynomials in $x_{1}, \ldots, x_{n}$ and let $S$ be a complex matrix such that $T = SMS^{-1}$ is upper triangular. Then the eigenvalues of $\partial_{a,b}M$ are the same as the eigenvalues of $\partial_{a,b}T$.
\end{lem}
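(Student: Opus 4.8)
The plan is to show that $\partial_{a,b}T$ and $\partial_{a,b}M$ are \emph{similar} matrices, from which the equality of their spectra is immediate, since similar matrices share the same characteristic polynomial. The key tool is Lemma~\ref{diagramlemma}, which allows me to push the conjugating matrix $S$ through the operator $\partial_{a,b}$ at the cost of replacing $S$ by $\partial S = S \otimes I_2$.

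First I would write $T = SMS^{-1}$ and observe that, since $S$ is a constant complex matrix and $M$ has homogeneous degree-$1$ entries, the product $SM$ again has homogeneous degree-$1$ entries, so Lemma~\ref{diagramlemma} is applicable to it. Applying the first identity of Lemma~\ref{diagramlemma} gives $(\partial S)(\partial_{a,b}M) = \partial_{a,b}(SM)$, and then applying the second identity to the degree-$1$ matrix $SM$ with conjugator $S^{-1}$ gives $(\partial_{a,b}(SM))(\partial S^{-1}) = \partial_{a,b}(SMS^{-1}) = \partial_{a,b}T$. Combining the two yields $(\partial S)(\partial_{a,b}M)(\partial S^{-1}) = \partial_{a,b}T$.

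Next I would note that $\partial$ commutes with inversion: since $\partial S = S \otimes I_2$, we have $(\partial S)^{-1} = S^{-1} \otimes I_2 = \partial(S^{-1}) = \partial S^{-1}$. Hence the displayed relation reads $\partial_{a,b}T = (\partial S)(\partial_{a,b}M)(\partial S)^{-1}$, exhibiting $\partial_{a,b}T$ as a conjugate of $\partial_{a,b}M$ by the invertible matrix $\partial S$. Therefore the two matrices have identical characteristic polynomials and, in particular, identical eigenvalues (with multiplicities), which is exactly the claim.

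I do not anticipate a genuine obstacle here; the only points requiring care are verifying that the degree-$1$ homogeneity hypothesis of Lemma~\ref{diagramlemma} is preserved under left multiplication by the constant matrix $S$ (so that the lemma applies to $SM$ and not merely to $M$), and checking that $\partial S$ is invertible with inverse $\partial S^{-1}$. Both are routine, so the content of the lemma is essentially the bookkeeping that lets conjugation by $S$ on $M$ become conjugation by $\partial S$ on $\partial_{a,b}M$. This is precisely what is needed to combine with Lemma~\ref{diagramlemma} and reduce the spectral analysis of $\partial_{a,b}M$ to that of $\partial_{a,b}T$ for an upper-triangular $T$.
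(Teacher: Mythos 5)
Your proof is correct and follows essentially the same route as the paper: both establish $\partial_{a,b}T = (\partial S)(\partial_{a,b}M)(\partial S)^{-1}$ via Lemma~\ref{diagramlemma} together with the identity $(\partial S)^{-1} = \partial(S^{-1})$, and conclude by similarity. Your additional check that $SM$ still has homogeneous degree-$1$ entries (so the lemma applies to it) is a detail the paper leaves implicit, but it is the same argument.
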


\begin{proof} Note that $(\partial S)^{-1} = (S \otimes I_{2})^{-1} = S^{-1} \otimes I_{2} = \partial(S^{-1})$. So by Lemma~\ref{diagramlemma} we get
\[ \partial_{a,b} T = \partial_{a,b}(SMS^{-1}) = (\partial S) (\partial_{a,b} M) (\partial S^{-1}) = (\partial S) (\partial_{a,b} M) (\partial S)^{-1}.\]
Therefore, $\partial_{a,b} M$ and $\partial_{a,b} T$ are similar and thus have the same eigenvalues.
\end{proof}

\begin{proof}[Proof of Theorem~\ref{generalfactor}] By Corollary~\ref{formcor}, $M^{P'} = \partial_{a,b}M^{P}$. Let $S$ be the matrix that simultaneously upper-triangularizes the matrices $G_{i}$. Then $T = SM^{P}S^{-1}$ is an upper triangular matrix whose diagonal entries are the eigenvalues $x^{\mathfrak{s}}$ of $M^{P}$. Since $M^{P}$ satisfies the upset property, these eigenvalues are linear in the $x_{i}$'s. By Lemma~\ref{jordan} the eigenvalues of $M^{P'}$ are the same as the eigenvalues of $\partial_{a,b}T$ which is block upper-triangular with $2 \times 2$ blocks $\partial_{a,b}x^{\mathfrak{s}}$ on the main diagonal. Note that 
\[ \partial_{a,b}x^{\mathfrak{s}} = \begin{pmatrix*}[c]  \displaystyle c_{b}^{\mathfrak{s}}x_{b} + \sum_{k \nprec a,b}c_{k}^{\mathfrak{s}}x_{k}  & \displaystyle  c_{a}^{\mathfrak{s}}x_{a} + \sum_{k \prec a}c_{k}^{\mathfrak{s}}x_{k}  \\  \displaystyle c_{a}^{\mathfrak{s}}x_{b} + \sum_{k \prec a}c_{k}^{\mathfrak{s}}x_{k}  &  \displaystyle  c_{b}^{\mathfrak{s}}x_{a} + \sum_{k \nprec a,b}c_{k}^{\mathfrak{s}}x_{k}  \end{pmatrix*}.\]
Since by assumption, $M^{P}$ has the upset property, there are only two cases: $c_{a}^{\mathfrak{s}} = c_{b}^{\mathfrak{s}} =c$ and $c_{a}^{\mathfrak{s}} =0, c_{b}^{\mathfrak{s}}  \neq 0$. In the former case,
\[ \begin{pmatrix*}[r] 1 & 0 \\ -1 & 1   \end{pmatrix*} \partial_{a,b}x^{\mathfrak{s}} \begin{pmatrix*}[r] 1 & 0 \\ -1 & 1   \end{pmatrix*}^{-1} = \begin{pmatrix*}[c] x^{\mathfrak{s}} & 0 \\ 0 & \displaystyle\sum_{k \nprec a,b}c_{k}^{\mathfrak{s}}x_{k} - \sum_{k \prec a}c_{k}^{\mathfrak{s}}x_{k}   \end{pmatrix*} \]
In the latter case, by the upset property we also have that $\sum_{k \prec a}c_{k}^{\mathfrak{s}}x_{k} =0$ and, therefore, 
\[\partial_{a,b}x^{\mathfrak{s}} = \begin{pmatrix*}[c] \displaystyle c_{b}^{\mathfrak{s}}x_{b}  + \sum_{k \nprec a,b}c_{k}^{\mathfrak{s}}x_{k}  & 0 \\  0 &  \displaystyle c_{b}^{\mathfrak{s}}x_{a} + \sum_{k \nprec a,b}c_{k}^{\mathfrak{s}}x_{k}  \end{pmatrix*}.\]
This also shows that there is a real matrix $S'$ such that $S' (\partial_{a,b} T) (S')^{-1}$ is upper triangular. Consequently, $S'(\partial S) M^{P'} (S'(\partial S))^{-1}$ is upper triangular, which means that the matrices $G_{i}'$ such that $M^{P'} = \sum x_{k} G_{i}'$ are simultaneously upper-triangularizable.

Finally, notice that $R_{P'} \subset R_{P}$ and if $(a',b') \in R_{P'}$ then $\{a',b'\} \cap \{a,b\} =\emptyset$ and either $a', b' \prec a$ or $a', b' \npreceq b$. So, by inspection, the eigenvalues of $M^{P'}$ satisfy the conditions $(a)$ and $(b)$ from the definition of the upset property.

\end{proof}



\section{Partition function and convergence rates} \label{S:convergencesec}


The stationary distribution for the promotion Markov chain is given by Theorem~\ref{stationary}. Here we find the partition function in the case when $P$ is a union of ordinal sums of forests and ladders. 

\begin{thm}
Let $P = F_1 \oplus L_1 + \cdots + F_k \oplus L_k$ be a poset of size $n$ where $F_i$ is a forest and $L_i$ is a ladder for $i = 1, \ldots, k$ of size $n$. Let $L_i = Q^{i}_{1} \oplus \cdots \oplus Q^{i}_{t_i}$ where $Q^{i}_{j} = \xymatrix{\bullet a^{i}_{j} & \bullet b^{i}_{j}}$ or $Q^{i}_{j} = \xymatrix{\bullet a^{i}_{j}}$.  The partition function for the promotion graph is given by 

\begin{equation}\label{Zp} Z_P = \displaystyle \prod_{i=1}^n \displaystyle x_{\preccurlyeq i} \displaystyle \prod_{Q^{i}_{j}: |Q^{i}_{j}| =2} \displaystyle \frac{x_{\preccurlyeq a^{i}_{j} \cup b^{i}_{j}}}{x_{\preccurlyeq a^{i}_{j}} + x_{\preccurlyeq b^{i}_{j}}},\end{equation}
where $x_{\preccurlyeq a^{i}_{j} \cup b^{i}_{j}} = \displaystyle \sum_{s \preccurlyeq a^{i}_{j} \text{ or } s \preccurlyeq b^{i}_{j}} x_s$. 
\end{thm}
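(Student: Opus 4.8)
The plan is to evaluate the sum of the stationary weights directly: since the stationary distribution is just the normalization of the weights $w(\pi)$ from Theorem~\ref{stationary}, it suffices to show that $\sum_{\pi\in\mathcal L(P)}w(\pi)$ equals the reciprocal of the product on the right-hand side of \eqref{Zp}.

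First I would record a peeling recursion valid for an \emph{arbitrary} finite poset $R$. Treating the $x_s$ as formal variables, write $x_R=\sum_{s\in R}x_s$ and
\[ Y_R=\sum_{\pi\in\mathcal L(R)}\ \prod_{i=1}^{|R|}\frac{1}{x_{\{\pi_1,\dots,\pi_i\}}}. \]
Grouping the linear extensions of $R$ according to their last letter $\pi_{|R|}=j$ (necessarily a maximal element of $R$), and using that deleting this letter is a bijection from $\{\pi\in\mathcal L(R):\pi_{|R|}=j\}$ onto $\mathcal L(R\setminus\{j\})$ which strips off exactly the factor $1/x_R$ (the $i=|R|$ term, where $\{\pi_1,\dots,\pi_{|R|}\}$ is the full ground set, while the remaining terms reproduce $w_{R\setminus\{j\}}$), I obtain
\[ Y_R=\frac{1}{x_R}\sum_{j\ \text{maximal in}\ R}Y_{R\setminus\{j\}}. \]

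Next I would check that the class of posets $F_1\oplus L_1+\cdots+F_k\oplus L_k$ is closed under deleting a maximal element, so that induction on $|R|$ stays inside the class: a maximal element is either a root of some $F_i$ (when $L_i$ is empty) or lies in the top rung $Q^i_{t_i}$ of a nonempty ladder, and deleting it shortens a ladder, converts a size-$2$ top rung into a size-$1$ one, or removes a forest root. Let $\hat W_R$ denote the reciprocal of the product in \eqref{Zp} written for $R$. I would then prove $Y_R=\hat W_R$ by induction, the base case being immediate. The crux is computing the ratio $\rho_j=\hat W_{R\setminus\{j\}}/\hat W_R$. If $j$ is maximal but not part of a size-$2$ top rung, deleting $j$ removes only the factor $1/x_{\preccurlyeq j}$: every other element's down-set is unchanged (since $j$ is maximal), and every correction factor is untouched (since $j$ lies above, or in a different component from, the rungs whose data they involve); hence $\rho_j=x_{\preccurlyeq j}$. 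If $j=a$ is one vertex of a size-$2$ top rung $\{a,b\}$, then the partner $b$ retains the same down-set, the single correction factor $\frac{x_{\preccurlyeq a}+x_{\preccurlyeq b}}{x_{\preccurlyeq a\cup b}}$ of that rung disappears (the rung becomes size $1$), and $1/x_{\preccurlyeq a}$ is removed; thus $\rho_a=x_{\preccurlyeq a}\cdot\frac{x_{\preccurlyeq a\cup b}}{x_{\preccurlyeq a}+x_{\preccurlyeq b}}$, symmetrically $\rho_b=x_{\preccurlyeq b}\cdot\frac{x_{\preccurlyeq a\cup b}}{x_{\preccurlyeq a}+x_{\preccurlyeq b}}$, and therefore $\rho_a+\rho_b=x_{\preccurlyeq a\cup b}$.

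Finally I would close the induction by a partition argument. The sets $\{s:s\preccurlyeq j\}$ attached to size-$1$ maximal elements $j$ and the sets $\{s:s\preccurlyeq a\ \text{or}\ s\preccurlyeq b\}$ attached to size-$2$ top rungs $\{a,b\}$ partition the ground set of $R$: a component whose ladder is nonempty is exhausted by the down-set of its top rung, a forest component is partitioned by the down-sets of its roots, and distinct components are disjoint. Summing the contributions above over all maximal elements thus gives $\sum_{j\ \text{max}}\rho_j=x_R$, so the recursion yields $Y_R=\frac{1}{x_R}\,\hat W_R\cdot x_R=\hat W_R$, completing the induction and hence the theorem. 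I expect the main obstacle to be the bookkeeping in the size-$2$ case—verifying that deleting one vertex of the top rung leaves the partner's down-set and all remaining correction factors intact while destroying exactly one correction factor—together with the clean verification that the owned down-sets partition the ground set.
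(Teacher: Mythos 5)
Your proposal is correct and takes essentially the same route as the paper's own proof: both arguments peel off the last letter of a linear extension (necessarily a maximal element, lying in a top rung), compute exactly the same ratios---$x_{\preccurlyeq j}$ for a size-$1$ top element and $x_{\preccurlyeq a}\cdot\frac{x_{\preccurlyeq a\cup b}}{x_{\preccurlyeq a}+x_{\preccurlyeq b}}$ for one vertex of a size-$2$ rung---and close the induction by noting that these contributions sum to $x_1+\cdots+x_n$ because the relevant down-sets partition the ground set. The only cosmetic difference is that you show the unnormalized sum $Y_R$ equals $1/Z_R$, while the paper equivalently shows the weights $w(\pi)Z_P$ sum to $1$ (and you are slightly more explicit about components whose ladder part is empty, where the maximal elements are forest roots).
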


\begin{proof}
By Theorem~\ref{stationary}, we need to show that $w'(\pi) := w(\pi) Z_P$ with \begin{equation}\label{wpi} w(\pi)  = \displaystyle \prod_{i=1}^n \displaystyle \frac{1}{x_{\pi_1} + \cdots + x_{\pi_i}}\end{equation} satisfies $\displaystyle \sum_{\pi \in \mathcal{L}(P)} w'(\pi) = 1.$ 

We will use induction on the size of $P$. One can readily check that this is true if $n=1$. Assume it is true for posets of this form of size $n-1$ and let $P$ be as described in the assumptions. If $\pi=\pi_{1}\cdots \pi_{n}$, then $\pi_{n}$ is an element in one of the top levels of $P$, i.e.,  $\pi_{n} \in Q^{i}_{t_i}$ for some $i \in [k]$. Therefore,

\[\displaystyle \sum_{\pi \in \mathcal{L}(P)} w'(\pi) = \displaystyle \sum_{i: |Q^{i}_{t_i}|=2} \left(\displaystyle \sum_{\sigma \in \mathcal{L}(P \setminus \{a^{i}_{t_{i}}\})} w'(\sigma a^{i}_{t_{i}})  + \displaystyle \sum_{\sigma \in \mathcal{L}(P \setminus \{b^{i}_{t_{i}}\} )} w'(\sigma b^{i}_{t_{i}}) \right) + \sum_{i: |Q^{i}_{t_i}|=1} \displaystyle \sum_{\sigma \in \mathcal{L}(P \setminus \{a^{i}_{t_{i}}\})} w'(\sigma a^{i}_{t_{i}})  .\]
By~\eqref{Zp} and~\eqref{wpi}, if $|Q^{i}_{t_i}| =2$ then 

\begin{align*}w'(\sigma a^{i}_{t_{i}}) = w'(\sigma) \displaystyle \frac{ x_{\preccurlyeq a^{i}_{t_{i}}}}{x_1 + \cdots + x_n} \cdot \frac{x_{\preccurlyeq a^{i}_{t_{i}} \cup b^{i}_{t_{i}}}}{x_{\preccurlyeq a^{i}_{t_{i}}} + x_{\preccurlyeq b^{i}_{t_{i}}}}, \\
w'(\sigma b^{i}_{t_{i}}) = w'(\sigma) \displaystyle \frac{ x_{\preccurlyeq b^{i}_{t_{i}}}}{x_1 + \cdots + x_n} \cdot \frac{x_{\preccurlyeq a^{i}_{t_{i}} \cup b^{i}_{t_{i}}}}{x_{\preccurlyeq a^{i}_{t_{i}}} + x_{\preccurlyeq b^{i}_{t_{i}}}},
\end{align*}
and if $|Q^{i}_{t_i}| =1$,
\[ w'(\sigma a^{i}_{t_{i}}) = w'(\sigma) \displaystyle \frac{ x_{\preccurlyeq a^{i}_{t_{i}}}}{x_1 + \cdots + x_n}.\]
Hence, using the induction hypothesis, we get
\begin{align*}\displaystyle \sum_{\pi \in \mathcal{L}(P)} w'(\pi)  & = \displaystyle \sum_{i: |Q^{i}_{t_i}|=2} \left(\displaystyle   \frac{ x_{\preccurlyeq a^{i}_{t_{i}}}}{x_1 + \cdots + x_n} \cdot \frac{x_{\preccurlyeq a^{i}_{t_{i}} \cup b^{i}_{t_{i}}}}{x_{\preccurlyeq a^{i}_{t_{i}}} + x_{\preccurlyeq b^{i}_{t_{i}}}}   + \displaystyle  \frac{ x_{\preccurlyeq b^{i}_{t_{i}}}}{x_1 + \cdots + x_n} \cdot \frac{x_{\preccurlyeq a^{i}_{t_{i}} \cup b^{i}_{t_{i}}}}{x_{\preccurlyeq a^{i}_{t_{i}}} + x_{\preccurlyeq b^{i}_{t_{i}}}} \right) \\
& + \displaystyle \sum_{i: |Q^{i}_{t_i}|=1} \displaystyle  \displaystyle \frac{ x_{\preccurlyeq a^{i}_{t_{i}}}}{x_1 + \cdots + x_n}  \\
& = \displaystyle \sum_{i: |Q^{i}_{t_i}|=2}  \displaystyle  \frac{x_{\preccurlyeq a^{i}_{t_{i}} \cup b^{i}_{t_{i}}}}{x_1 + \cdots + x_n}  + \displaystyle \sum_{i: |Q^{i}_{t_i}|=1} \displaystyle \frac{ x_{\preccurlyeq a^{i}_{t_{i}}}}{x_1 + \cdots + x_n} \\
& = 1.
\end{align*}

\end{proof}

\begin{exa}
Let $P$ be the poset from Example~\ref{promotionex}. $P$ can be written as $P = F \oplus L$ for $F = {\scalebox{.5}{ \xymatrix{ 2\ar@{-}[d] & 3 \\ 1 }}}$ and $L = {\scalebox{0.5}{ \xymatrix{5\ar@{-}[dr] & & 6 \ar@{-}[dl] \\ & 4 & }}}$.

The first product in the formula~\eqref{Zp} for $Z_{P}$ is 

\[ \displaystyle \prod_{i=1}^n \displaystyle x_{\preccurlyeq i} = x_1(x_1+x_2)x_{3}(x_1+x_2+x_3+x_4)(x_1+x_2+x_3+x_4+x_5)(x_1+x_2+x_3+x_4+x_6)\]

while the second product is 
\[\prod_{Q^{i}_{j}: |Q^{i}_{j}| =2} \displaystyle \frac{x_{\preccurlyeq a^{i}_{j} \cup b^{i}_{j}}}{x_{\preccurlyeq a^{i}_{j}} + x_{\preccurlyeq b^{i}_{j}}} = \frac{x_{1}+x_{2}+x_{3}+x_{4}+x_{5}+x_6}{(x_{1}+x_{2}+x_{3}+x_{4}+x_{5}) + (x_{1}+x_{2}+x_{3}+x_{4}+x_{6})}.\]


%

\end{exa}

For the case $P=F \oplus L$, we can make an explicit statement about the rate of convergence to stationarity and the mixing time. Let $P^k$ be the distribution after $k$ steps and  $\mathbb{P}^{k}$ be the $k$-th convolution power of the distribution $\mathbb{P}$. The rate of convergence is the total variation distance from stationarity after $k$ steps, that is,
\[ \|P^k - w\|_{TV}= \frac{1}{2} \sum_{\pi \in \mathcal{L}(P)} |\mathbb{P}^k(\pi) - w(\pi)|  \]
where $w$ is the stationary distribution. We will use the following theorem.

\begin{thm}\cite{sandpile} \label{leftwalk} Let $M$ be a monoid acting on a set $\Omega$ and let $\mathbb{P}$ be a probability distribution on $M$. Let $\mathcal{M}$ be the Markov chain with state set $\Omega$ such that the transition probability from $x$ to $y$ is the probability that $mx = y$ if $m$ is chosen from $M$ according to $\mathbb{P}$. Assume that $\mathcal{M}$ is irreducible and aperiodic with stationary distribution $w$ and that some element of $M$ acts as a constant map on $\Omega$.
Letting $P^{k}$ be the distribution of $\mathcal{M}$ after $k$ steps and $\mathbb{P}^{k}$ be the $k$-th convolution power of $\mathbb{P}$, we have that
\[\|P^k - w\|_{TV} \leq \mathbb{P}^{k}(M \setminus C),\]
where C is the set of elements of M acting as constants on $\Omega$.
\end{thm}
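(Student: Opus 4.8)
The plan is to prove this by a \emph{coupling} argument driven by common randomness (a ``grand coupling''). First I would realize the Markov chain as a random dynamical system: generate a sequence $m_1, m_2, \ldots$ of i.i.d.\ elements of $M$, each distributed according to $\mathbb{P}$, and for any starting point $x \in \Omega$ define the trajectory $X_0^x = x$, $X_{j}^x = m_j X_{j-1}^x$. Driving every copy of the chain by the \emph{same} randomness $m_1, m_2, \ldots$ couples all the chains simultaneously, and after $k$ steps the copy started at $x$ sits at $X_k^x = m_k \cdots m_1\, x$, the image of $x$ under the random product $f_k := m_k \cdots m_1$. By the definition of convolution, $f_k$ has law $\mathbb{P}^k$.

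The key observation is the coalescence property supplied by the constant maps. If the random product $f_k$ lies in $C$, i.e.\ $f_k$ acts as a constant map, then $X_k^x$ is the \emph{same} element of $\Omega$ for every starting point $x$; all coupled chains have collapsed to a single state. In particular, couple the chain started from the initial distribution (so that $X_k$ has law $P^k$) with a chain started from the stationary distribution $w$; since $w$ is stationary, the latter copy $Y_k$ has law $w$ for every $k$. Both copies are driven by the same $m_1, \ldots, m_k$, so $X_k = f_k X_0$ and $Y_k = f_k Y_0$, and whenever $f_k \in C$ we have $X_k = Y_k$. Hence the event $\{X_k \neq Y_k\}$ is contained in $\{f_k \notin C\}$.

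Finally I would invoke the coupling inequality: for any coupling of two random variables with laws $P^k$ and $w$,
\[ \|P^k - w\|_{TV} \leq \Pr(X_k \neq Y_k). \]
Combining this with the inclusion of events from the previous step gives
\[ \|P^k - w\|_{TV} \leq \Pr(X_k \neq Y_k) \leq \Pr(f_k \notin C) = \mathbb{P}^k(M \setminus C), \]
which is the claimed bound. The hypotheses of irreducibility and aperiodicity guarantee, via Perron--Frobenius, that the stationary distribution $w$ is unique and well defined, so that starting the second copy ``from stationarity'' makes sense, while the assumption that $M$ contains a constant map is what makes $C$ nonempty and the bound eventually nontrivial.

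I expect the only point requiring genuine care to be the bookkeeping of the action and convolution order: one must verify that the state after $k$ steps really is $m_k \cdots m_1\, x$, with the $m_j$ applied in the correct order for a left action, and that the random product $m_k \cdots m_1$ has law exactly $\mathbb{P}^k$ under the convention for convolution used in \cite{sandpile} (a reindexing $u_i = m_{k+1-i}$ confirms this). Everything else reduces to the standard coupling inequality, so the substantive content is entirely the coalescence-via-constant-map observation, which is immediate once the grand coupling has been set up.
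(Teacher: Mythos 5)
Your proof is correct. Note that the paper itself offers no proof of this statement: it is imported verbatim from the reference \cite{sandpile}, so there is no internal argument to compare against. Your grand-coupling argument is sound in every step: realizing the chain as $X_k = m_k\cdots m_1 X_0$ with i.i.d.\ $m_j\sim\mathbb{P}$, noting that $f_k = m_k\cdots m_1$ has law $\mathbb{P}^k$ (your remark that exchangeability of the i.i.d.\ sequence makes the order convention immaterial is exactly the right disposal of that worry), running a second copy from stationarity on the same randomness, observing $\{X_k\neq Y_k\}\subseteq\{f_k\notin C\}$, and finishing with the coupling inequality. For comparison, the proof in the cited reference (in the Brown--Diaconis tradition for monoid/semigroup walks) is the same coalescence idea in a slightly different dressing: one works with the \emph{reversed} products $g_k = m_1 m_2\cdots m_k$, which have the property that once some $g_j$ is a constant map, $g_k x_0$ is frozen for all $k\geq j$; the a.s.\ limit of $g_k x_0$ is then shown to have law $w$, and the same coupling inequality gives the bound. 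That variant has the advantage of actually constructing the stationary distribution (so existence of $w$ comes out of the argument), whereas your version takes the existence of $w$ as given by the irreducibility and aperiodicity hypotheses --- which is precisely how the theorem is stated, so nothing is lost.
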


In our case the monoid (set with an associative multiplication and an identity element) acting on $\mathcal{L}(P)$ is $\mathcal{M}^{\hat{\partial}}$ generated by the operators $\hat{\partial}_{i}$ defined by the promotion  graph $G_{P}$. That is, for $\pi, \pi' \in \mathcal{L}(P)$, $\hat{\partial}_{i} \pi = \pi'$ if and only if $\pi' = \partial_{\pi^{-1}(i)} \pi$. In what follows it will be helpful to have the following alternate description of $\hat{\partial}_{i}$.

\begin{lem}\label{linearextensionsnew}
Let $P=F_{1}\oplus L_{1} + \cdots + F_{k}\oplus L_{k}$, where $F_{i}$ is a forest and $L_{i}$ is a ladder.  For $\pi \in \mathcal{L}(P)$,  $\hat{\partial}_k\pi$ is the linear extension of $P$ obtained from $\pi$ by moving the letter $k$ to the last position and reordering the letters $j \succeq k$, swapping the original order of incomparable elements at the same level of a ladder $L_{i}$.
\end{lem}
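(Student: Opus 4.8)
The plan is to unwind the definition of $\hat{\partial}_k$ and track the action of the underlying operator $\partial_{\pi^{-1}(k)}$ position by position, using the structure $P=F_1\oplus L_1+\cdots+F_k\oplus L_k$ to control exactly which swaps $\tau_i$ fire. Recall that $\hat{\partial}_k\pi=\partial_{\pi^{-1}(k)}\pi=\tau_{n-1}\cdots\tau_{p+1}\tau_p\,\pi$, where $p=\pi^{-1}(k)$ is the position of $k$ in $\pi$. First I would observe that each $\tau_i$ in this composition acts on the element that currently sits just after the image of $k$: starting from position $p$, the letter $k$ is compared successively with the letter to its right, and it swaps past that letter precisely when the two are incomparable in $P$. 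So the whole operator should be read as ``bubble $k$ rightward as far as the incomparability relation allows, then let the displaced block settle.''

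The key structural fact I would isolate is this: in $P=F_1\oplus L_1+\cdots+F_k\oplus L_k$, an element $j\ne k$ is incomparable with $k$ if and only if either $j$ lies in a different connected component $F_t\oplus L_t$ entirely, or $j$ lies in the same component but is neither $\succeq k$ nor $\preceq k$. Since $k$ is being promoted, the elements it must ultimately pass are exactly those that are \emph{not} successors of $k$; the elements $j\succeq k$ block a direct swap and are the ones that get reordered. I would argue that as $k$ bubbles right, every element incomparable with $k$ is passed over and ends up to the left of $k$'s final (last) position, which yields that $k$ is carried to the very end of the word. The elements $j\succeq k$, which cannot swap directly with $k$, are the ones whose relative order is rearranged; I would show that the cascade of $\tau_i$'s after $k$ has passed acts on the sub-word of successors of $k$ and performs exactly one promotion-type pass on it.

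The heart of the argument is then to identify what that pass does to the successors of $k$. Since $k$ lies in some $F_t\oplus L_t$, its successors $j\succeq k$ form an up-set that is an ordinal sum whose top part is a ladder $L_t=Q_1\oplus\cdots\oplus Q_{t}$ of antichains of size one or two. Comparable successors keep their forced order; the only freedom is within each rank-$2$ antichain $Q_j$, where the two incomparable elements occupy two adjacent levels of the word. I would verify, by the same local $\tau_i$ analysis as in Lemma~\ref{form}, that running the remaining $\tau_i$'s through the block of successors swaps the two incomparable elements at each such level, which is precisely the assertion that we reorder the letters $j\succeq k$ by ``swapping the original order of incomparable elements at the same level of a ladder $L_i$.'' Finally I would check that the resulting word is indeed a linear extension of $P$, which is automatic since $\hat{\partial}_k\pi=\partial_{\pi^{-1}(k)}\pi$ is defined to land in $\mathcal{L}(P)$.

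The main obstacle I anticipate is bookkeeping at the interface between the forest part $F_t$ and the ladder part $L_t$, and between successors and non-successors: one has to confirm that after $k$ is removed from position $p$, the elements $j\succeq k$ slide to the correct levels and that no unintended swap occurs between a successor and an incomparable element of a sibling component. The cleanest way to handle this is to process $P$ one component at a time and within a component to induct on the rank, reducing each level's behavior to the two-element antichain computation already carried out in the proof of Lemma~\ref{form}; the rest is careful but routine position-tracking.
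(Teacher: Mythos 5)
Your proposal breaks down at its central mechanistic claim: that ``as $k$ bubbles right, every element incomparable with $k$ is passed over \ldots which yields that $k$ is carried to the very end of the word.'' That is false whenever $k$ is not maximal in $P$. The letter $k$ only swaps past the incomparable letters lying between it and the \emph{first} letter $j_1$ of $\pi$ with $j_1 \succ k$; at that point $\tau$ acts trivially (the pair is comparable), and all subsequent transpositions act on $j_1$, not on $k$. The bubbling is a relay: $k$ halts just before $j_1$, then $j_1$ bubbles until it is blocked by the first letter $j_2 \succ j_1$, and so on, and only the last runner in the relay --- a maximal element of $P$ --- reaches position $n$. The paper's own example shows this: $\hat{\partial}_3\,371824695 = 718234956$, where $3$ ends in position $5$, not position $9$. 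Note that ``moving the letter $k$ to the last position'' in the statement is only the first half of a two-step recipe; the second step, reordering the letters $j \succeq k$, includes $k$ itself (since $k \succeq k$) and pulls it back to the smallest of the positions occupied by its up-set. Under your reading the output would keep $k$ in the last position, which is not even a linear extension when $k$ has successors. The error propagates into your next step: ``the cascade of $\tau_i$'s after $k$ has passed'' never occurs, since $k$ never passes its successors, and the incomparable letters beyond $j_1$ are passed by the later runners of the relay, not by $k$.

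What a correct proof (and the paper's own, which is a relay argument) does is track the handoffs. Since the up-set of $k$ in its component $F_t \oplus L_t$ is a chain (the part inside the forest) followed by the ladder levels above $k$, the successive active letters are exactly: $k$, the chain elements above it, and, from each two-element ladder level, whichever of the two letters occurs first in $\pi$. Each active letter halts one position before the original position of the next active letter; every other letter to the right of $k$ --- including the second-occurring letter of each two-element ladder level --- shifts exactly one position to the left; and the final active letter travels to position $n$. This bookkeeping reproduces precisely the stated recipe, and the same-level swap arises because the first-occurring letter of each two-element level bubbles past its incomparable sibling before being stopped by the level above. Your last paragraph correctly isolates this local swap, but without replacing the ``$k$ goes to the end'' claim by the relay description, the argument does not establish the lemma.
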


\begin{proof} 
By the definition of $\hat{\partial}_i$, we have $\hat{\partial}_i\pi = \tau_{n-1} \cdots \tau_{k+1} \tau_k \pi$, where $k=\pi^{-1}(i)$. The transpositions start swapping $i$ with the elements that follow it until an element $j\succeq i$ is reached. Then $j$ is swapped with the elements that follow it, etc. So, the elements $j$ that begin the new series of swaps are the ones that are in the ladder above $i$. Moreover, the two elements in this ladder will be swapped themselves because they are incomparable. 
\end{proof}

\begin{exa} Let $P$ be the poset on $[9]$ with covering relations $1 \prec 2$, $2 \prec 4$, $3 \prec 4$, $4 \prec 5$, $4 \prec 6$, $7 \prec 8$, and $7 \prec 9$. To compute $\hat{\partial}_3 371824695$, we first move $3$ to the end of the word to obtain $718246953$. Then we reorder the elements $\{3,4,5,6\}$ to form a linear extension, but in the process we swap the order of $5$ and $6$. So, since $6$ appears to the left of $5$ in $371824695$, we now place $5$ to the left of $6$. This way we get $\hat{\partial}_3 371824695 = 718234956$.
\end{exa}

For $x \in \mathcal{M}^{\hat{\partial}}$, let $\mathrm{im}(x) = \{x\pi \colon \pi \in \mathcal{L}(P)\}$. Let $\mathrm{rfactor}(x)$ be the maximal common right factor of the elements in $\mathrm{im}(x)$ and let $\mathrm{Rfactor}(x) = \{i \colon i \in \mathrm{rfactor}(x)\}$ . 

\begin{lem}\label{Rfactor}
Let $P = F \oplus L$ be a poset of size $n$, where $F$ is a  rooted forest and $L$ is a ladder. Then
\begin{enumerate}[(a)]
\item  $\mathrm{Rfactor}(x) \subseteq  \mathrm{Rfactor}(\hat{\partial}_i x)$ for all $x \in \mathcal{M}^{\hat{\partial}}$ and $i = 1, \ldots, n$,
\item $\mathrm{Rfactor}(x) \subsetneq \mathrm{Rfactor}(\hat{\partial}_kx)$ for $k$ maximal in $P\setminus \mathrm{Rfactor}(x)$. 
\end{enumerate}
\end{lem}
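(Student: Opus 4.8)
The plan is to exploit the rigid structure that the ordinal sum $F \oplus L$ forces on linear extensions. Writing $L = Q_1 \oplus \cdots \oplus Q_t$, the relations $F \prec Q_1 \prec \cdots \prec Q_t$ imply that in every $\pi \in \mathcal{L}(P)$ the elements of $F$ occupy the first $|F|$ positions, then $Q_1$ the next $|Q_1|$ positions, and so on, with $Q_t$ in the last $|Q_t|$ positions. So a linear extension is encoded by a linear extension of $F$ together with, for each level $Q_m$ of size $2$, a choice of one of its two orderings, and these live in fixed position blocks. First I would record the consequence that $\mathrm{Rfactor}(x)$ is always a union of whole top ladder levels $Q_s \cup \cdots \cup Q_t$, together with an up-set of $F$ in the case $L \subseteq \mathrm{Rfactor}(x)$: a size-$2$ level either has its ordering agreed across all of $\mathrm{im}(x)$, in which case both its letters lie in the common suffix, or it is disagreed, in which case it contributes nothing and blocks every lower level, while size-$1$ levels are automatically agreed. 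In particular $\mathrm{Rfactor}(x)$ is an up-set, recorded level by level.

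Next I would translate Lemma~\ref{linearextensionsnew} into a level-by-level description of $\hat\partial_i$. If $i \in Q_j$, the letters $\succeq i$ are $i$ together with $Q_{j+1} \cup \cdots \cup Q_t$, so $\hat\partial_i$ leaves the $F$-part and the levels $Q_1, \ldots, Q_{j-1}$ untouched, flips the ordering of every higher level $Q_{j+1}, \ldots, Q_t$, and returns $i$ to the later slot of its own level $Q_j$. If $i \in F$, then every ladder level is $\succeq i$ and is flipped, while the $F$-part is acted on by the ordinary forest promotion $\hat\partial^F_i$. The crucial point, which I would verify from the baton-passing description $\partial_i = \tau_{n-1}\cdots\tau_i$, is that the new ordering of each ladder level $Q_m$ depends only on the old ordering of $Q_m$ (and on $i$): levels above $i$ are flipped, the level of $i$ is set to the constant ``$i$ last'' regardless of its former order, and lower levels are fixed. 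Hence $\hat\partial_i$ acts on the ladder coordinates independently and the $F$-coordinate decouples.

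Part~(a) then follows: if a level $Q_m$ (or, when $L \subseteq \mathrm{Rfactor}(x)$, the forest part) is agreed across $\mathrm{im}(x)$, then since each coordinate is transformed by a map depending only on that coordinate, it remains agreed across $\mathrm{im}(\hat\partial_i x) = \hat\partial_i(\mathrm{im}(x))$; as the agreed levels formed the top block, they remain a common suffix, giving $\mathrm{Rfactor}(x) \subseteq \mathrm{Rfactor}(\hat\partial_i x)$. For the forest coordinate (needed only when $i \in F$ and $L \subseteq \mathrm{Rfactor}(x)$) I would invoke the corresponding statement for rooted forests from the $\mathcal{R}$-trivial setting of \cite{cmc,mcpo}. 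For part~(b), set $S = \mathrm{Rfactor}(x)$ and let $k$ be maximal in $P \setminus S$. If $S \not\supseteq L$, the topmost ladder level $Q_{s-1}$ outside $S$ contains the maximal elements of $P \setminus S$, so $k \in Q_{s-1}$, and this level is necessarily of size $2$ and disagreed (a size-$1$ top level would already lie in $S$). After $\hat\partial_k$ it is set to the constant ``$k$ last'', hence becomes agreed, while $Q_s, \ldots, Q_t$ stay agreed; thus $Q_{s-1}$ joins the common suffix and the containment is strict. If instead $L \subseteq S$, then $k \in F$ and the strict growth is the rooted-forest case.

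The main obstacle I anticipate is the second step: extracting from Lemma~\ref{linearextensionsnew} the exact level-by-level action of $\hat\partial_i$ — in particular that every level strictly above $i$ is flipped and that $i$ is returned to the later slot of its own level irrespective of its former position — and confirming that this action genuinely decouples the ladder levels from one another and from the forest part. Everything else reduces to bookkeeping on top blocks, with the forest coordinate deferred to the known rooted-forest analysis.
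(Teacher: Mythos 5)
Your proposal is correct and, at its core, uses the same ingredients as the paper's proof: the fixed block structure of $\mathcal{L}(F\oplus L)$, the description of $\hat\partial_i$ in Lemma~\ref{linearextensionsnew}, and the observation that disagreement across $\mathrm{im}(x)$ can only live at the topmost non-constant ladder level, which $\hat\partial_k$ collapses to the constant ``$k$ last.'' The organization, however, is genuinely different. You first characterize $\mathrm{Rfactor}(x)$ (whole top ladder levels, plus a forest up-set when $L\subseteq\mathrm{Rfactor}(x)$) and then prove a decoupling lemma for $\hat\partial_i$ (levels above $i$ flip uniformly, $i$'s level becomes constant, lower levels and the $F$-coordinate are fixed or transformed by forest promotion), after which (a) is automatic and (b) is a short computation at the critical level. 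The paper never isolates this decoupling; it argues instead on the quotient: since $\mathrm{Rfactor}(x)$ is an upset, $P\setminus\mathrm{Rfactor}(x)=F'\oplus L'$, and it cannot have a unique maximal element (that would contradict maximality of $\mathrm{rfactor}(x)$); this splits into the forest case $L'=\emptyset$, handled in-line by the fact that the up-set of any element of a rooted forest is a chain with top element $k_i$, giving $\mathrm{Rfactor}(x)\cup\{k_i\}\subseteq\mathrm{Rfactor}(\hat\partial_i x)$, and the case $L'\neq\emptyset$ with two maximal elements $a,b$, where both suffix types $ab\,\mathrm{rfactor}(x)$ and $ba\,\mathrm{rfactor}(x)$ occur in $\mathrm{im}(x)$, so that $\hat\partial_a,\hat\partial_b$ force agreement while the remaining generators preserve it. Your decoupling buys a cleaner, more uniform treatment of (a); its price is the forest coordinate, which you defer to \cite{cmc,mcpo}. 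Be careful there: the paper does not rely on a citable forest analogue of this lemma but proves exactly what it needs on the spot, and the statement you want is implicit in, rather than stated by, the cited works --- so either pin it down there or, more simply, run the paper's chain argument on your $F$-coordinate (your projection of $\mathrm{im}(x)$ to the $F$-block is itself the image of an element of the forest promotion monoid, so the argument transfers verbatim).
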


\begin{proof}
 Let $x \in \mathcal{M}^{\hat{\partial}}$. Each $\pi \in \mathrm{im}(x)$ is of the form $\pi = \pi' \mathrm{rfactor}(x)$. We consider two cases. If $i \in \mathrm{Rfactor}(x)$, then $\hat{\partial}_i \pi = \pi' \hat{\partial}_i \mathrm{rfactor}(x)$ and therefore, clearly, $\mathrm{Rfactor}(x) \subseteq  \mathrm{Rfactor}(\hat{\partial}_i x)$. Suppose now $i \notin \mathrm {Rfactor}(x)$. Since $\mathrm{Rfactor}(x)$ is an upset of $P$, the poset $P \setminus \mathrm{Rfactor}(x)$ is also of the form $P \setminus \mathrm{Rfactor}(x) = F' \oplus L'$, for a forest $F'$ and a ladder $L'$. Notice that if $P \setminus \mathrm{Rfactor}(x)$ has one maximal element then we get a contradiction of the maximality of $\mathrm{rfactor}(x)$. Therefore,  either $L' = \emptyset$ or $L' \neq \emptyset$ and $P \setminus \mathrm{Rfactor}(x)$ has two maximal elements. If $L' = \emptyset$, i.e., $P \setminus \mathrm{Rfactor}(x)$ is a forest, for every $i \in P \setminus \mathrm{Rfactor}(x)$, the set $\{j \in P \setminus \mathrm{Rfactor}(x) \colon i \preceq j\}$ is a chain and has a unique maximal element $k_{i}$. Then, by Lemma~\ref{linearextensionsnew}, $\mathrm{Rfactor}(x) \cup \{k_{i}\} \subset \mathrm{Rfactor}(\hat{\partial}_i x)$. On the other hand, if $L' \neq \emptyset$ and  $P \setminus \mathrm{Rfactor}(x)$ has two maximal elements, $a$ and $b$, then each $\pi \in \mathrm{im}(x)$ is of the form  $\pi = \pi''\;a\;b\; \,\mathrm{rfactor}(x)$ or $\pi = \pi''\;b\;a\; \mathrm{rfactor}(x)$ and both these forms appear in $\mathrm{im}(x)$. Hence $\mathrm{Rfactor}(\hat{\partial}_ax) \supseteq \mathrm{Rfactor}(x) \cup \{a\}$, $\mathrm{Rfactor}(\hat{\partial}_bx) \supseteq \mathrm{Rfactor}(x) \cup \{b\}$, and for $i \neq a,b$, $\mathrm{Rfactor}(\hat{\partial}_ix) = \mathrm{Rfactor}(x)$.
\end{proof}

\begin{thm}\label{convergence}
Let $P=F\oplus L$ be a poset of size $n$, where $F$ is a forest and $L$ is  a ladder. Let $p_{x} = \min\{x_i : 1 \leq i \leq n\}$. Then for $k \geq (n -1)/p_x$, the distance to stationarity of the promotion Markov chain satisfies $$\|P^k - \omega\|_{TV} \leq \mathrm{exp} \left(- \displaystyle \frac{(kp_x - (n -1))^2}{2kp_x} \right).$$
\end{thm}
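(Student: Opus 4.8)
The plan is to apply Theorem~\ref{leftwalk} to the monoid $\mathcal{M}^{\hat{\partial}}$ acting on $\mathcal{L}(P)$, and to bound the tail probability $\mathbb{P}^{k}(\mathcal{M}^{\hat{\partial}} \setminus C)$ by a concentration inequality. First I would verify the hypotheses of Theorem~\ref{leftwalk}: the promotion Markov chain is irreducible and aperiodic (this is already known from~\cite{cmc}), and I must exhibit an element of $\mathcal{M}^{\hat{\partial}}$ that acts as a constant map. The natural candidate is a product $\hat{\partial}_{k_{1}}\hat{\partial}_{k_{2}}\cdots$ that successively fills up the right factor until $\mathrm{Rfactor} = P$, at which point $\mathrm{im}(x)$ is a single linear extension, i.e.\ $x$ is constant. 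Lemma~\ref{Rfactor} is exactly the tool for this: part~(b) guarantees that by applying the operator indexed by a maximal element of the current complement $P\setminus\mathrm{Rfactor}(x)$, the right factor strictly grows, so after at most $n$ such well-chosen steps the word is completely determined. Thus the set $C$ of constant maps is nonempty, and an element $x \in \mathcal{M}^{\hat{\partial}}$ lies in $C$ as soon as $\mathrm{Rfactor}(x) = P$.

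Next I would reinterpret the tail bound combinatorially. By Theorem~\ref{leftwalk}, $\|P^{k}-\omega\|_{TV} \le \mathbb{P}^{k}(\mathcal{M}^{\hat{\partial}}\setminus C)$, where $\mathbb{P}^{k}$ is the distribution of a random word $\hat{\partial}_{i_{1}}\cdots\hat{\partial}_{i_{k}}$ with each letter drawn independently according to the $x_{i}$. The key observation, powered by Lemma~\ref{Rfactor}(a), is that $\mathrm{Rfactor}$ is monotone nondecreasing under left multiplication, so once it reaches all of $P$ it stays there; hence the random product fails to be constant only if $\mathrm{Rfactor}$ has \emph{not} yet grown $n$ times in $k$ steps. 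I would set up a coupling/domination argument: at each step, conditioned on the current state not yet being constant, there is always at least one index (a maximal element of the nonempty complement) whose selection forces $\mathrm{Rfactor}$ to strictly increase, and that index is chosen with probability at least $p_{x}=\min_{i}x_{i}$. Therefore the number of ``successful'' steps in $k$ trials stochastically dominates a sum $B = B_{1}+\cdots+B_{k}$ of independent Bernoulli$(p_{x})$ random variables, and the product is constant whenever $B \ge n-1$ (we need $\mathrm{Rfactor}$ to grow from $\emptyset$, or more precisely from its initial size, up to $P$; since one can reach a full right factor after at most $n-1$ strict increases in this $F\oplus L$ setting, $B\ge n-1$ suffices).

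The remaining step is the concentration estimate. Writing $\mathbb{P}^{k}(\mathcal{M}^{\hat{\partial}}\setminus C) \le \Pr[B < n-1]$ with $\mathbb{E}[B] = k p_{x}$, I would apply a multiplicative Chernoff bound for the lower tail of a binomial: for $\delta \in (0,1)$,
\[
\Pr\big[B \le (1-\delta)\,kp_{x}\big] \le \exp\!\left(-\frac{\delta^{2}kp_{x}}{2}\right).
\]
Setting $(1-\delta)kp_{x} = n-1$, i.e.\ $\delta = 1 - (n-1)/(kp_{x})$, which lies in $(0,1)$ precisely when $k > (n-1)/p_{x}$, gives $\delta^{2}kp_{x} = (kp_{x}-(n-1))^{2}/(kp_{x})$ and hence the claimed bound
\[
\|P^{k}-\omega\|_{TV} \le \exp\!\left(-\frac{(kp_{x}-(n-1))^{2}}{2kp_{x}}\right).
\]
At the boundary $k = (n-1)/p_{x}$ the exponent is $0$ and the bound reads $\le 1$, trivially true, which is consistent with the stated hypothesis $k \ge (n-1)/p_{x}$.

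I expect the main obstacle to be making the stochastic-domination step fully rigorous rather than the concentration calculation, which is standard. The subtlety is that the ``good'' index whose selection increases $\mathrm{Rfactor}$ depends on the current random state, so one cannot simply declare the increments independent; one must argue that, conditioned on the entire history up to step $j$ with the state not yet constant, the conditional probability of a strict increase is at least $p_{x}$, and then invoke a standard lemma that such a process dominates a genuine Binomial$(k,p_{x})$. Lemma~\ref{Rfactor} supplies exactly the structural fact that a strict-increase index always exists (a maximal element of $P\setminus\mathrm{Rfactor}(x)$) and is unique enough that its probability is bounded below by $p_{x}$; care is needed in the two-maximal-element (ladder top) case, where either of the two maximal elements increases the right factor, so the success probability is in fact at least $2p_{x}$ there, only strengthening the bound.
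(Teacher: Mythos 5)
Your proposal is correct and follows essentially the same route as the paper: invoke Theorem~\ref{leftwalk}, use Lemma~\ref{Rfactor} to show that at every non-constant stage some operator (indexed by a maximal element of $P\setminus\mathrm{Rfactor}$) strictly grows the common right factor with probability at least $p_x$, compare the number of such successful steps with Bernoulli trials, and finish with Chernoff's lower-tail bound. The paper packages the monotonicity via the statistic $u(m)=n-|\mathrm{Rfactor}(m)|$ rather than your stochastic-domination phrasing, but the argument is the same; your extra care about conditioning on the history only makes explicit what the paper leaves implicit.
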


\begin{proof}

For $m \in \mathcal{M}^{\hat{\partial}}$, let $u(m)= n - |\mathrm{Rfactor}(m)|$. The statistic $u$ has the following three properties:

\begin{enumerate}
\item[(1)] $u(m'm) \leq u(m)$ for all $m, m' \in \mathcal{M}^{\hat{\partial}}$;
\item[(2)] if $u(m) > 0$, then there exists $\hat{\partial}_{i} \in \mathcal{M}^{\hat{\partial}}$ such that $u(\hat{\partial}_{i}m) < u(m);$
\item[(3)]  $u(m) = 0$ if and only if $m$ acts as a constant on $\mathcal{L}(P)$. 
\end{enumerate}

The first two properties follow from Lemma~\ref{Rfactor}, while $u(m) = 0$ if and only if  $\mathrm{rfactor}(m)$ is a linear extension of $P$ which is equivalent to $m$ being a constant map. Furthermore, for the identity map $\epsilon$, $u(\epsilon) \leq n$.

A step $m_{i} \rightarrow m_{i+1}$ in the left random walk on $\mathcal{M}^{\hat{\partial}}$ is successful if  $u(m_{i+1}) < u(m_i)$. Property~$(1)$ of $u$ implies that the step is not successful if and only if
$u(m_i) = u(m_{i+1})$, and by Property~$(2)$, each step has probability at least $p_{x}$ to be successful. Therefore, the probability that $n \geq  u(m) > 0$ after $k$ steps of the left random walk on $\mathcal{M}^{\hat{\partial}}$ is bounded above by the probability of having at most $n-1$ successes in $k$ Bernoulli trials with success probability $p_{x}$. Using Theorem~\ref{leftwalk} and Chernoff's inequality,

\[\|P^k - \omega\|_{TV} \leq  \mathrm{exp} \left(- \displaystyle \frac{(kp_{x} - (n-1))^2}{2kp_{x}}\right),\]
where the inequality holds for $p_{x}k > n -1$.
\end{proof}

The \emph{mixing time}  is the number of steps $k$ until $\|P^k - \omega\|_{TV} \leq e^{-c}$.  Using Theorem~\ref{convergence}, it suffices to have 
\[ (kp_x - (n -1))^2 \geq 2kp_xc,\]
so the mixing time is at most $\frac{2(n+c-1)}{p_{x}}$. If the probability distribution $\{x_{i} \colon 1 \leq i \leq n\}$ is uniform, then $p_{x}$ is of order $\frac{1}{n}$ and the mixing time is of order at most $n^{2}$.

\section{Other posets} \label{S:conclusion}

The posets of the form $P = F_{1}\oplus L_{1}+ \cdots +F_{k}\oplus L_{k}$  discussed in this paper are not the only posets to have the nice property that the eigenvalues of their promotion matrices are linear in the $x_{i}$'s. In fact, we conjecture the following.   Let $A_{i}$ denote an antichain of size $i$.  

\begin{conj}
The characteristic polynomial for the promotion matrix $M^{P}$ of any  poset $P$ whose Hasse diagram is contained  in $A_k \oplus A_2$  factors into linear terms. 
\end{conj}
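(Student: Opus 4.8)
The plan is to prove the statement by induction on the number of edges, starting from the complete two-level poset $A_k \oplus A_2$ and deleting one covering relation at a time, in the spirit of Theorem~\ref{generalfactor}. First I would dispose of the easy cases. A poset $P$ whose Hasse diagram lies in $A_k \oplus A_2$ has all of its non-minimal elements among the (at most two) maximal elements $a,b$ of $A_k \oplus A_2$; consequently the only elements that can have two successors are the minimal elements lying below \emph{both} $a$ and $b$. Hence $P$ fails to be a rooted forest if and only if the set $X$ of such doubly-covered minimal elements is nonempty, in which case both $a$ and $b$ are present. Writing $P = P_1 + A_m$, where $A_m$ is the antichain of isolated minimal elements and $P_1$ is the connected two-level poset built on $X$, the one-sided down-sets $Y,Z$, and $\{a,b\}$, the forest case is covered by Theorem~\ref{forest} and the direct-sum part by the same mechanism already used for $P=F_1\oplus L_1+\cdots$ in Theorem~\ref{forestladdereig}. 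So it suffices to treat a connected $P_1$ with exactly two maximal elements.

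For the base of the induction, note that the complete two-level poset $A_k \oplus A_2$ is of the form $F \oplus L$ with $F = A_k$ a rooted forest and $L = A_2$ a ladder of rank one. By Theorem~\ref{forestladdereig} its characteristic polynomial factors into linear terms, and by the argument in the proof of Theorem~\ref{generalfactor} the matrices $G_i$ with $M^{A_k\oplus A_2}=\sum_i x_i G_i$ are even simultaneously upper-triangularizable. Every $P_1 \subseteq A_k\oplus A_2$ is obtained from $A_k\oplus A_2$ by deleting the missing edges $u \prec v$ with $u$ minimal and $v\in\{a,b\}$ maximal, none of which are the edges making $X$ doubly covered; thus all intermediate posets remain two-level with two maximal elements. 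The inductive step I would prove is a deletion theorem analogous to Theorem~\ref{generalfactor}: if a two-level poset $R\subseteq A_k\oplus A_2$ has simultaneously upper-triangularizable $G_i$ and linear eigenvalues, then so does $R' = R\setminus\{(u,v)\}$, with the spectrum of $M^{R'}$ obtained from that of $M^R$ together with a controlled set of new \emph{linear} eigenvalues. The technical engine would be the analogue of Lemma~\ref{form}: a description of how the promotion graph $G_{R'}$ sits over $G_{R}$ once $u$ and $v$ are made incomparable, together with a generalized $\partial_{a,b}$ construction expressing $M^{R'}$ in a basis adapted to $M^{R}$.

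The main obstacle is that the clean doubling $\pi \mapsto \{\pi,\hat{\pi}\}$ underlying $\partial_{a,b}$ breaks down here. In the setting of Theorem~\ref{generalfactor} the elements $a,b$ are ordinal-sum-adjacent, so $\mathcal{L}(P')$ has exactly $2|\mathcal{L}(P)|$ elements and $M^{P'}=\partial_{a,b}M^P$ is a uniform $2\times 2$ block expansion. For a genuine two-level deletion this fails: deleting one cover from the rank-two ladder $A_2\oplus A_2$ of Figure~\ref{ladder} yields a poset with $5$ linear extensions, not $8$, because making $u$ incomparable to a maximal element $v$ creates only those new extensions in which $v$ slips before $u$, and their number is neither constant nor equal to $|\mathcal{L}(R)|$. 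Thus the expansion relating $M^{R'}$ to $M^{R}$ is non-uniform and poset-dependent, and the heart of the proof is to identify the correct replacement for $\partial_{a,b}$ — tracking the position of $v$ relative to $u$ in each extension — and to show that this operation still preserves simultaneous triangularizability while contributing only eigenvalues that are linear in the $x_i$ (necessarily with coefficients $\pm1$, in agreement with parts~\eqref{cone} and~\eqref{ctwo} of Conjecture~\ref{conj}). An appealing alternative that sidesteps the expansion bookkeeping would be to construct directly, for each such $P$, an invariant flag of $\mathbb{C}\mathcal{L}(P)$ indexed by the upsets of $P$ on which all $G_i$ act upper-triangularly, generalizing the explicit tensor eigenfunctions found for a single ladder by Algorithm~\ref{algo}; I expect either route to require the same new combinatorial input about how promotion permutes extensions across a single broken cover.
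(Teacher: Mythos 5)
There is a genuine gap, and it is the one you yourself flag in your last paragraph: the entire proof hinges on a deletion lemma for a cover $u \prec v$ joining a minimal element to a maximal element of a two-level poset, and you do not construct it. The machinery of Theorem~\ref{generalfactor} is not merely inconvenient here, it is structurally unavailable: that theorem requires $(a,b) \in R_P$, i.e., the deleted cover must sit between two consecutive singleton levels of an ordinal sum, which is exactly what guarantees $|\mathcal{L}(P')| = 2|\mathcal{L}(P)|$ and makes $M^{P'} = \partial_{a,b}M^P$ a uniform $2\times 2$ block expansion (Lemma~\ref{form}, Corollary~\ref{formcor}). As you correctly observe with the count of $5$ versus $8$ linear extensions for $(A_2\oplus A_2)\setminus\{(u,v)\}$, no such uniform expansion exists in the two-level setting, so the inductive step you propose is not a modification of the paper's argument but a new theorem whose statement you have not even pinned down (which eigenvalues appear, with what multiplicities, and why the expansion preserves simultaneous triangularizability). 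Naming the obstacle and expressing the expectation that it can be overcome is a research plan, not a proof. There is also a secondary gap in your reduction: you dispose of isolated minimal elements by writing $P = P_1 + A_m$ and appealing to ``the same mechanism'' as Theorem~\ref{forestladdereig}, but that theorem only covers direct sums of posets of the special form $F_i \oplus L_i$; neither the paper nor your proposal contains a general lemma that taking a direct sum with an antichain preserves linear factorization of the characteristic polynomial.

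For comparison, the paper does not prove this statement either --- it is stated as a conjecture, and the supporting evidence given is Theorem~\ref{minusoneedge}, which treats only the single case $(A_k\oplus A_2)\setminus\{(k,k+1)\}$ of one missing edge. Its method is quite different from your plan: rather than inducting on deleted edges, it works directly with $M^P - \lambda I$, partitioning it into blocks indexed by permutations of $[k]$, performing explicit row and column operations to reach a block upper-triangular form, and recognizing the two diagonal blocks as Kronecker-product perturbations of the Tsetlin library matrices $M^{A_{k-1}}$ and $M^{A_k}$, whose diagonalizability (Brown) then yields the factorization~\eqref{edge}. If you want to salvage your approach, the honest target is to prove your deletion lemma at least in the two-level setting and verify it reproduces Theorem~\ref{minusoneedge}; until then the proposal establishes nothing beyond the base case already covered by Theorem~\ref{forestladdereig}.
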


As a justification of this conjecture, we will prove the special case when $P$ is $A_k \oplus A_2$ with one missing edge. 

\begin{thm}\label{minusoneedge}
The characteristic polynomial of $M^{P}$ for the poset $P = (A_k \oplus A_2)\setminus \{(k,k+1)\}$ which is assumed to be labeled naturally is

\begin{equation} \label{edge} \det(M^P - \lambda I) = (x_{k+2} - \lambda)^{(k-1)!} \displaystyle \prod_{U \subseteq [k]} (x_U +x_{k+1} +x_{k+2} - \lambda)^{d_{k-|U|}} \prod_{U \subseteq [k-1]} (-x_U - \lambda)^{d_{k-|U|} + d_{k-|U|-1}},\end{equation}
where $x_{U}= \sum_{i \in U}x_{i}$ and $d_{i}$ is the number of derangements in the symmetric group $S_{i}$.

\end{thm}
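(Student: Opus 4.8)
The plan is to reduce everything to two explicit spectral computations. First I would record the spectrum of the undeleted poset $A_k \oplus A_2$. Since $A_k \oplus A_2$ is the forest $A_k$ ordinally summed with the rank-one ladder $A_2$, Lemma~\ref{matrixformlem} gives $M^{A_k \oplus A_2} = M^{A_k} \otimes J_2 + I_{k!} \otimes M^{A_2}$. The factor $M^{A_k}$ is exactly the Tsetlin library on $k$ books, because every ordering of an antichain is a linear extension and $\hat{\partial}_i$ simply moves $i$ to the end; hence by Theorem~\ref{forest} applied to the forest $A_k$ (whose upsets are all of $[k]$) its eigenvalues are $x_S$ for $S \subseteq [k]$ with multiplicity $d_{k-|S|}$. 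Diagonalizing $M^{A_k}$ and then solving the $2\times 2$ problem $x_S J_2 + M^{A_2}$ on each resulting eigenline, whose eigenvalues are $x_S + x_{k+1} + x_{k+2}$ and $-x_S$, shows that $M^{A_k \oplus A_2}$ has eigenvalues $x_S + x_{k+1} + x_{k+2}$ and $-x_S$, each with multiplicity $d_{k-|S|}$, for $S \subseteq [k]$. I would emphasize that the edge $(k,k+1)$ is \emph{not} in $R_{A_k \oplus A_2}$ (the elements $k$ and $k+1$ are not ``twin'' singleton levels, as $x \prec k+1$ for $x < k$ but nothing lies below $k$), so Theorem~\ref{generalfactor} does not apply and a direct argument is required.

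Next I would describe $\mathcal{L}(P)$ and the resulting block form of $M^P$. Deleting $(k,k+1)$ only frees $k$ to pass $k+1$, so $\mathcal{L}(P) = \mathcal{L}(A_k \oplus A_2) \sqcup E$, where $E = \{\sigma\,(k{+}1)\,k\,(k{+}2) : \sigma \in S_{k-1}\}$ collects the new extensions in which $k{+}1$ precedes $k$; thus $|E| = (k-1)!$, matching the exponent in the first factor of~\eqref{edge}. A direct analysis of the promotion operators (move a letter to the end, then re-sort the letters above it) shows that on each element of $E$ only $\hat{\partial}_k$ and $\hat{\partial}_{k+2}$ act as the identity, while $\hat{\partial}_{k+1}$ and $\hat{\partial}_i$ for $i < k$ leave $E$. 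Ordering the basis as $(\mathcal{L}(A_k \oplus A_2), E)$ we therefore obtain $M^P = \left(\begin{smallmatrix} A & B \\ C & D \end{smallmatrix}\right)$ with scalar block $D = (x_k + x_{k+2}) I_{(k-1)!}$. Crucially $B \neq 0$: the operator $\hat{\partial}_k$ carries an extension of the form $\rho\,(k{+}1)(k{+}2)$ into $E$, so the decomposition is genuinely two-way and $A$ is \emph{not} $M^{A_k \oplus A_2}$.

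To factor the characteristic polynomial I would use that $D - \lambda I$ is scalar and form the Schur complement, giving $\det(M^P - \lambda I) = (x_k + x_{k+2} - \lambda)^{(k-1)!}\,\det\!\big(A - \lambda I - (x_k + x_{k+2} - \lambda)^{-1} BC\big)$. The positive eigenvalues I expect to handle by isolating the ``symmetric'' top direction: since the positive list of $P$ coincides \emph{exactly} with that of $A_k \oplus A_2$, there should be an $M^P$-invariant subspace on which the deleted edge is invisible, spanned by the $(1,1)$-ladder combinations tensored with the Tsetlin eigenvectors, carrying the eigenvalues $x_U + x_{k+1} + x_{k+2}$ with multiplicities $d_{k-|U|}$ for $U \subseteq [k]$. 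On the complementary ``antisymmetric'' sector together with $E$, the scalar block and the mixing $BC$ must be resolved explicitly: the prefactor $(x_k + x_{k+2} - \lambda)^{(k-1)!}$ has to be converted into $(x_{k+2} - \lambda)^{(k-1)!}$, and the negative eigenvalues $-x_{U \cup \{k\}}$ of $A_k \oplus A_2$ collapsed onto $-x_U$, producing the multiplicities $d_{k-|U|} + d_{k-|U|-1}$ for $U \subseteq [k-1]$. All multiplicities and the total degree $(2k+1)(k-1)!$ I would confirm using the derangement identity $\sum_i \binom{m}{i} d_i = m!$.

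The hard part will be the antisymmetric/$E$ sector. Unlike the forest case there is no $\mathcal{R}$-triviality to force a triangular form, and the two-way mixing through $\hat{\partial}_k$ means the Schur complement carries genuine $\lambda$-dependence in its denominators. Making precise the cancellation of $(x_k + x_{k+2} - \lambda)$ against the emerging factor $(x_{k+2} - \lambda)$, and simultaneously tracking how the $d_{k-|U|-1}$ worth of multiplicity migrates from $-x_{U \cup \{k\}}$ onto $-x_U$, is the crux. I expect this step to require an explicit eigenfunction construction on this sector rather than a clean determinantal identity, in the spirit of the ladder diagonalization carried out in Section~\ref{S:oneladder}.
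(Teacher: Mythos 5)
Your preliminary setup is sound as far as it goes: the description of $\mathcal{L}(P)$ as $\mathcal{L}(A_k\oplus A_2)\sqcup E$ with $E=\{\sigma\,(k{+}1)\,k\,(k{+}2)\colon \sigma\in S_{k-1}\}$, the observation that the diagonal block on $E$ is the scalar matrix $(x_k+x_{k+2})I_{(k-1)!}$, the fact that $\hat{\partial}_k$ carries $\sigma\,k\,(k{+}1)(k{+}2)$ into $E$ so the splitting is genuinely two-way, and the degree count $(2k+1)(k-1)!$ are all correct. But the proposal stops exactly where the theorem begins. Everything after the Schur complement --- the claimed invariant subspace carrying the eigenvalues $x_U+x_{k+1}+x_{k+2}$, the conversion of $(x_k+x_{k+2}-\lambda)^{(k-1)!}$ into $(x_{k+2}-\lambda)^{(k-1)!}$, and the migration of multiplicity from $-x_{U\cup\{k\}}$ onto $-x_U$ --- is stated as an expectation (``there should be'', ``has to be converted'', ``I expect this step to require'') rather than proved. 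Moreover, the object you would have to analyze, $A-\lambda I-(x_k+x_{k+2}-\lambda)^{-1}BC$, has entries that are rational in $\lambda$, so none of the tools you invoke (simultaneous triangularization, the ladder eigenfunctions of Section~\ref{S:oneladder}) apply to it directly, and you give no argument that its determinant factors at all. This is a genuine gap: the factorization~\eqref{edge}, which is the entire content of the theorem, is never established.

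For comparison, the paper's proof avoids the old-versus-new splitting, and hence the Schur complement, altogether. It partitions the basis by the position $i$ of the letter $k$ in the underlying permutation of $[k]$, attaching the $(k-1)!$ new extensions to the blocks with $i=k$; this yields $3\times 3$ blocks $B_{\sigma_k}^{\pi_k}$ and $2\times 2$ blocks $B_{\sigma_i}^{\pi_j}$ for $i,j\neq k$, all expressed through the Tsetlin matrix $M^{A_k}=(a_{\sigma_i}^{\pi_j})$. Two rounds of elementary row and column operations (subtract the rows of $B_{\sigma_k}^{*}$ from $B_{\sigma_i}^{*}$, then add the columns of $B_{*}^{\pi_j}$ into $B_{*}^{\pi_k}$) make the matrix genuinely block upper triangular, with an upper block of the form $M^{A_{k-1}}\otimes(\cdot)+I_{k-1}\otimes\Lambda$ and a lower block of the form $b_\ell\otimes(\cdot)+I\otimes(\cdot)$; since $M^{A_{k-1}}$ and $b_\ell$ are diagonalizable Tsetlin-type matrices, conjugating by $S\otimes I$ reduces everything to explicit $3\times 3$ and $2\times 2$ determinants, which produce both products in~\eqref{edge} with the stated multiplicities. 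If you want to salvage your approach, the lesson is that the useful partition of the basis is by the position of $k$, not by old-versus-new extensions: the former turns the two-way mixing you correctly identified into triangularity after elementary operations, whereas the latter forces you into a rational Schur complement that you cannot factor with the available tools.
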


\begin{proof}
Let $\sigma_i$ represent the permutation of $[k]$ in which $k$ is in the $i$-th position and $\sigma$ is the permutation of $[k-1]$ obtained when $k$ is deleted. Consider $M^{P}- \lambda I$ for $P = (A_k \oplus A_2)\setminus \{(k,k+1)\}$. 

$M^{P}$ can be split into blocks $B_{\sigma_i}^{\pi_j}$ of size $2 \times 2, 2 \times 3, 3 \times 2$, or $3 \times 3$ as follows: \\
\begin{itemize}
\item If $i = k$, the three rows of $B_{\sigma_k}^{\pi_j}$ correspond to the linear extensions $\sigma_k (k+1) (k+2)$, $\sigma_k (k+2) (k+1)$, and $\sigma (k+1) k (k+2)$. \\
\item If $i \neq k$, the two rows of $B_{\sigma_i}^{\pi_j}$ correspond to the linear extensions $\sigma_i (k+1) (k+2)$ and $\sigma_i (k+2) (k+1)$.
\end{itemize}
The columns of $B_{\sigma_i}^{\pi_j}$ are indexed analogously depending of whether $j=k$ or $j \neq k$. Let the transition matrix of the Tsetlin library for $k$ books be $M^{A_k} = \left(a_{\sigma_i}^{\pi_j}\right)_{i,j =1}^{k}$. Then  

$$B_{\sigma_i}^{\pi_j} = \begin{cases}  a_{\sigma}^{\pi}\begin{pmatrix} 0 & 0 &0 \\ 0 & 0 & 0 \\ 0 & 1 & 0 \end{pmatrix} + \delta_{\sigma,\pi} \begin{pmatrix} x_{k+2}-\lambda & x_{k+1} & x_k \\ x_{k} + x_{k+2} & x_{k+1}-\lambda & 0 \\ 0 & x_{k+1} & x_k + x_{k+2}-\lambda \end{pmatrix} & \text{ if } i, j = k,\\ 
a_{\sigma_i}^{\pi_j} \begin{pmatrix} 0 & 1 \\ 1 & 0 \end{pmatrix} + \delta_{\sigma_i, \pi_j} \ \begin{pmatrix} x_{k+2}-\lambda & x_{k+1} \\ x_{k+2} & x_{k+1}-\lambda \end{pmatrix} & \text{ if } i, j \neq k, \\ 
a_{\sigma_k}^{\pi_j}\begin{pmatrix} 0 & 1 \\ 1 & 0 \\ 0 & 0 \end{pmatrix}  & \text{ if } i=k, j \neq k, \\
a_{\sigma_i}^{\pi_k} \delta_{\sigma, \pi} \begin{pmatrix} 0 & 0 & 1 \\ 1 & 0 & 0 \end{pmatrix} & \text{ if } i \neq k, j =k, \end{cases}$$ where $\delta_{{x,y}}$ is the Kronecker delta function.


If we subtract the first two rows of $B_{\sigma_k}^*$ from $B_{\sigma_i}^*$ for all $i  \neq k$, the block change is given by 
\[B_{\sigma_i}^{\pi_j} \mapsto B_{\sigma_i}^{\pi_j} = \begin{cases} -  \delta_{\sigma,\pi} \begin{pmatrix} x_{k+2} -\lambda & x_{k+1} & 0 \\  x_{k+2} & x_{k+1} -\lambda& 0  \end{pmatrix} & \text{ if }  j =k, \\ a_{\sigma_i}^{\pi_j} \begin{pmatrix} 0 & 1 \\ 1 & 0 \end{pmatrix} + \delta_{\sigma_i
, \pi_j}  \begin{pmatrix} x_{k+2}-\lambda & x_{k+1} \\ x_{k+2} & x_{k+1}-\lambda \end{pmatrix}- a_{\sigma_k}^{\pi_j}\begin{pmatrix} 0 & 1 \\ 1 & 0  \end{pmatrix}  & \text{ if } j \neq k. \end{cases}\]

If we then add the columns of $B_*^{\pi_j}$ to the first two columns of $B_*^{\pi_k}$ for $j = 1, \ldots, k-1$, the block change is given by 
\[B_{\sigma_i}^{\pi_k} \mapsto B_{\sigma_i}^{\pi_k} = \begin{cases} \begin{pmatrix} 0 & 0 & 0 \\ 0 & 0 & 0 \end{pmatrix} & \text{ if } i \neq k,\\ a_{\sigma}^{\pi}\begin{pmatrix} 0 & 1 & 0 \\ 1 & 0 & 0 \\ 0 & 1 & 0 \end{pmatrix} + \delta_{\sigma_i,\pi_j} \begin{pmatrix} x_{k+2}-\lambda & x_{k+1} & x_k \\ x_{k} + x_{k+2} & x_{k+1}-\lambda & 0 \\ 0 & x_{k+1} & x_k + x_{k+2}-\lambda \end{pmatrix}  & \text{ if } i = k. \end{cases}\]

If the blocks are ordered so that $B_{\sigma_k}^{\pi_k}$ are in the upper left, this yields a block upper triangular matrix with one block $B_{u}$ of size $3(k-1)! \times 3(k-1)!$ consisting of the $3 \times 3$ blocks $B_{\sigma_k}^{\pi_k}$ and  another block $B_{\ell}$  consisting of the $2 \times 2$ blocks $B_{\sigma_i}^{\pi_j}$ for $i, j \neq k$. Next, we compute the determinant of each of these blocks separately. The upper block $B_u$ is similar in structure to $M^{A_{k-1}} - \lambda I$. Namely, $B_u$ can be obtained from $M^{A_{k-1}} - \lambda I$ by the substitutions \[x_m \mapsto \begin{pmatrix} 0 & x_m & 0 \\ x_m & 0 & 0 \\ 0 & x_m & 0 \end{pmatrix} \qquad \text{ and } \qquad - \lambda \mapsto \Lambda = \begin{pmatrix} x_{k+2} - \lambda & x_{k+1} & x_k \\ x_{k} + x_{k+2} & x_{k+1} - \lambda & 0 \\ 0 & x_{k+1} & x_{k} + x_{k+2} - \lambda \end{pmatrix}.\] In other words,
\[ B_{u} = M^{A_{k-1}} \otimes \begin{pmatrix} 0 & 1 & 0 \\ 1 & 0 & 0 \\ 0 & 1 & 0 \end{pmatrix} + I_{k-1} \otimes \begin{pmatrix} x_{k+2} - \lambda & x_{k+1} & x_k \\ x_{k} + x_{k+2} & x_{k+1} - \lambda & 0 \\ 0 & x_{k+1} & x_{k} + x_{k+2} - \lambda \end{pmatrix}.\]

By  \cite{brown2000semigroups}, $SM^{A_{k-1}}S^{-1}$ is diagonal for some matrix $S$. Therefore,  $(S \otimes I_3) B_u (S^{-1} \otimes I_3)$ is block diagonal with blocks $\begin{pmatrix} x_{k+2} - \lambda & x_{k+1} + x_{U} & x_k \\ x_k + x_{k+2} + x_{U} & x_{k+1} - \lambda & 0 \\ 0 & x_{k+1} + x_{U} & x_{k} + x_{k+2} - \lambda \end{pmatrix}$ corresponding to $(x_{U} - \lambda)$ in $M^{A_{k-1}} - \lambda I$. Thus,  \[ \det B_{u} = \displaystyle \prod_{U \subseteq [k-1]} (-x_{U} - \lambda)^{d_{k-1-|U|}}(x_{k} + x_{k+1} + x_{k+2} + x_{U} - \lambda)^{d_{k-1 - |U|}}(x_{k+2}  -\lambda)^{d_{k -1-|U|}}.\]

For the lower block, $B_{\ell},$  first notice that there are similarities between $M^P$ and $M^{A_k}$. First, the entries in $M^{A_k}= \left(a_{\sigma_i}^{\pi_j}\right)_{i,j =1}^{k}$ are only zero or $x_i$ for some $i \in \{1, \ldots, k\}$. We perform the following row and column operations on $M^{A_k} -\lambda I = \left(m_{\sigma_i}^{\pi_j}\right)_{i,j =1}^{k}$. If we subtract the rows ${\sigma_i}$ from ${\sigma_k}$ for all $i \neq k$ the entries change to 

\[ m_{\sigma_i}^{\pi_j} \mapsto m_{\sigma_i}^{\pi_j} = \begin{cases} -\lambda \delta_{\sigma, \pi} &  \text{ if } j = k, \\ a_{\sigma_i}^{\pi_j} - a_{\sigma_k}^{\pi_j} +\lambda\delta_{\sigma_i, \pi_j} & \text{ if }  j \neq k. \end{cases} \] If we then add the columns ${\pi_j}$ to the column ${\pi_k}$ for $j =1,\ldots, k$, the entries become 

\[m_{\sigma_i}^{\pi_k} \mapsto m_{\sigma_i}^{\pi_k} = \begin{cases} 0 & \text{ if  } i \neq k, \\ a_{\sigma_i}^{\pi_j} + \lambda\delta_{\sigma_i, \pi_j} & \text{ if } i =k.\end{cases}\]

Notice that the matrices for the row and column operations are inverses of each other, so that the resulting matrix is similar to $M^{A_k} -\lambda I$. Moreover,  if we order the linear extensions of $A_k$ so that $a_{\sigma_k}^{\pi_k}$ is in the upper left corner, the resulting matrix is  block upper triangular matrix, where for the lower block, $b_{\ell}$, we have \[B_{\ell} = b_{\ell} \otimes \begin{pmatrix} 0 & 1 \\ 1 & 0 \end{pmatrix} + I \otimes \begin{pmatrix} x_{k+2} & x_{k+1} \\ x_{k+2} & x_{k+1} \end{pmatrix}.\] The block $b_{\ell}$ does not contain $x_{k}$ and is of size $(k-1)(k-1)! \times (k-1)(k-1)!$, which is the sum of the multiplicities of all the eigenvalues of $M^{A_{k}}$ whose support does not contain $x_{k}$. 

Since $M^{A_k} -\lambda I$ is diagonalizable, $b_{\ell}$ is also diagonalizable. Let  $S$ be such that $Sb_{\ell}S^{-1}$ is diagonal.  Then $(S \otimes I_{2})B_{\ell} (S^{-1} \otimes I_{2})$ is a block diagonal matrix with blocks $\begin{pmatrix} x_{k+2} -\lambda & x_{k+1} + x_{U} \\ x_{k+2} + x_{U} & x_{k+1} - \lambda \end{pmatrix}$ for every  eigenvalue $x_{U}$ of $M^{A_k}$ whose support does not contain $x_{k}$. This gives 

\[\det B_{\ell} = \displaystyle \prod_{U \subseteq [k-1]} (-x_{U}- \lambda)^{d_{k - |U|}}(x_{k+1} + x_{k+2} + x_{U} - \lambda)^{d_{k - |U|}}.\]

Since $\det(M^P - \lambda I) = \det B_{u} \det B_{\ell}$, we get~\eqref{edge}.
\end{proof}

\begin{exa} 

Let $P = (A_2 \oplus A_2)\setminus \{(2,3)\}$, i.e., $P$ is the poset on $[4]$ with covering relations $1 \prec 3$, $1 \prec 4$, and $2 \prec 4$. The eigenvalues of $M^P$ are $$x_1 + x_2 + x_3+x_4, 0, x_3+x_4, -x_1, x_4.$$ 
\end{exa}


\bibliographystyle{alpha}



\end{document}